\newcommand{\bl}[1]{\textcolor{blue}{#1}}
\definecolor{mypurple}{rgb}{.4,.0,.5}
\def\y{{\bf y}}
\def\x{{\bf x}}
\def\x{{\mathbf x}}
\def\u{{\bf u}}
\def\x{{\bf x}}
\def\y{{\bf y}}
\def\q{{\bf q}}
\def\m{{\bf m}}
\def\c{{\bf c}}
\def\h{{\bf h}}
\def\cH{{\mathcal H}}
\def\be{\begin{equation}}
\def\ee{\end{equation}}
\def\ba{\left[\begin{array}}
\def\ea{\end{array}\right]}
\def\u{{\bf u}}
\def\x{{\bf x}}
\def\y{{\bf y}}
\def\q{{\bf q}}
\def\c{{\bf c}}
\def\p{{\bf p}}
\def\1{{\bf 1}}
\def\0{{\bf 0}}
\def\calX{{\cal X}}
\def\calY{{\cal Y}}
\def\mR{{\mathbb R}}
\def\mN{{\mathbb N}}
\def\mE{{\mathbb E}}
\def\mS{{\mathbb S}}
\def\lp{\left (}
\def\rp{\right )}
\def\y{{\bf y}}
\def\x{{\bf x}}
\def\x{{\mathbf x}}
\def\u{{\bf u}}
\def\x{{\bf x}}
\def\y{{\bf y}}
\def\q{{\bf q}}
\def\c{{\bf c}}
\def\h{{\bf h}}
\def\cH{{\cal H}}
\def\be{\begin{equation}}
\def\ee{\end{equation}}
\def\ba{\left[\begin{array}}
\def\ea{\end{array}\right]}
\def\u{{\bf u}}
\def\x{{\bf x}}
\def\y{{\bf y}}
\def\q{{\bf q}}
\def\c{{\bf c}}
\def\p{{\bf p}}
\def\({\left (}
\def\){\right )}
\def\1{{\bf 1}}
\def\m{{\bf m}}
\def\q{{\bf q}}
\def\0{{\bf 0}}
\def\cX{{\mathcal X}}
\def\cY{{\mathcal Y}}
\definecolor{darkgreen}{rgb}{0, 0.4,0}
\definecolor{purplebrown}{rgb}{0.5,0.1,0.6}
\definecolor{ultclupcol}{rgb}{0.1,0.5,0.5}
\definecolor{mytrycolor}{rgb}{0.5,0.7,0.2}
\definecolor{ultclupcola}{rgb}{.5,0,.5}
\definecolor{shadebrown}{rgb}{0.1,0.1,0.9}
\definecolor{lightblue}{rgb}{0.2,0,1}
\newtcbox{\xmybox}{on line,
arc=7pt,
before upper={\rule[-3pt]{0pt}{10pt}},boxrule=0pt,
boxsep=0pt,left=6pt,right=6pt,top=0pt,bottom=0pt,enhanced, coltext=blue, colback=white!10!yellow}
\newtcbox{\xmyboxa}{on line,
arc=7pt,
before upper={\rule[-3pt]{0pt}{10pt}},boxrule=0pt,
boxsep=0pt,left=6pt,right=6pt,top=0pt,bottom=0pt,enhanced, colback=white!10!yellow}
\newtcbox{\xmyboxb}{on line,
arc=7pt,
before upper={\rule[-3pt]{0pt}{10pt}},boxrule=1pt,colframe=darkgreen!100!blue,
boxsep=0pt,left=6pt,right=6pt,top=0pt,bottom=0pt,enhanced, colback=white!10!yellow}
\newtcbox{\xmyboxc}{on line,
arc=7pt,
before upper={\rule[-3pt]{0pt}{10pt}},boxrule=.7pt,colframe=blue!100!blue,
boxsep=0pt,left=6pt,right=6pt,top=0pt,bottom=0pt,enhanced, coltext=blue, colback=white!10!yellow}
\newtcbox{\xmytboxa}{on line,
arc=7pt,
before upper={\rule[-3pt]{0pt}{10pt}},boxrule=.0pt,colframe=pink!50!yellow,
boxsep=0pt,left=6pt,right=6pt,top=0pt,bottom=0pt,enhanced, coltext=white, colback=blue!40!red}
\newtcbox{\xmytboxb}{on line,
arc=7pt,
before upper={\rule[-3pt]{0pt}{10pt}},boxrule=.0pt,colframe=pink!50!yellow,
boxsep=0pt,left=6pt,right=6pt,top=0pt,bottom=0pt,enhanced, coltext=white, colback=white!40!green}
\newcommand\subsubsubsection{\@startsection{paragraph}{4}{\z@}{-2.5ex\@plus -1ex \@minus -.25ex}{1.25ex \@plus .25ex}{\normalfont\normalsize\bfseries}}
\newcommand\subsubsubsubsection{\@startsection{subparagraph}{5}{\z@}{-2.5ex\@plus -1ex \@minus -.25ex}{1.25ex \@plus .25ex}{\normalfont\normalsize\bfseries}}
\newtheorem{theorem}{Theorem}
\newtheorem{corollary}{Corollary}
\begin{document}

\begin{singlespace}

\title {Studying Hopfield models via fully lifted random duality theory 
}
\author{
\textsc{Mihailo Stojnic
\footnote{e-mail: {\tt flatoyer@gmail.com}} }}
\date{}
\maketitle

\centerline{{\bf Abstract}} \vspace*{0.1in}

Relying on a recent progress made in studying bilinearly indexed (bli) random processes in \cite{Stojnicnflgscompyx23,Stojnicsflgscompyx23}, the main foundational principles of fully lifted random duality theory (fl RDT) were established in \cite{Stojnicflrdt23}. We here study famous Hopfield models and show that their statistical behavior can be characterized via the fl RDT. Due to a nestedly lifted nature, the resulting characterizations and, therefore, the whole analytical machinery that produces them, become fully operational only if one can successfully conduct underlying numerical evaluations. After conducting such evaluations for both positive and negative Hopfield models, we observe a remarkably fast convergence of the fl RDT mechanism. Namely, for the so-called square case, the fourth decimal precision is achieved already on the third (second non-trivial) level of lifting (3-sfl RDT) for the positive  and on the fourth (third non-trivial) level of lifting (4-sfl RDT) for the corresponding negative model. In particular,  we obtain the scaled ground state free energy $\approx 1.7788$ for the positive and  $\approx 0.3279$ for the negative model.

\vspace*{0.25in} \noindent {\bf Index Terms: Hopfield models; Fully lifted random duality theory}.

\end{singlespace}

\section{Introduction}
\label{sec:back}

Many natural phenomena, typically uncovered and explained through the postulates of statistical and experimental physics, have become  very popular subjects of intensive studies in a host of mathematical fields over the last several decades. Development of a synergistic approach, where key physical insights are used to trace paths for mathematically rigorous considerations, turned out to be very fruitful. Among the most successful examples of such a synergy are certainly the considerations related to the physics of the matter structure and, in particular, those related to the so called Sherrington-Kirkpatrick (SK) models (see, e.g., \cite{Par79,Par80,Par83,Parisi80,SheKir72} and \cite{Talbook11a,Talbook11b,Pan10,Pan10a,Pan13,Pan13a,Pan14,Guerra03,Tal06}).

As was expected (or even predicted), such synergistic approach evolved even further over time. Understanding the accompanying mathematics became more complete and, consequently, one reached the level where many of the relevant random structures, not even known to have any physical meaning, could be rigorously handled by following the statistical physics ideas as key insights. The number of such ``\emph{artificial}'' examples (where the direct connection to the corresponding natural phenomena is not immediately apparent) quickly grew over the last several decades and nowadays, quite likely, dominates the number of the ``\emph{natural}'' ones.  Moreover, given the pace at which the new ones are appearing, one can be assured that many more are yet to come. What often makes them particularly attractive, is that they typically span a variety of scientific fields ranging from theoretical and applied mathematics and probability (see, e.g., \cite{Talbook11a,Talbook11b,Pan10,Pan10a,Pan13,Pan13a,Pan14,Guerra03,Tal06,StojnicRegRndDlt10,Stojnicgscompyx16,Stojnicgscomp16,SchTir03}),  to signal processing and information theory (see, e.g., \cite{BayMon10,BayMon10lasso,StojnicCSetam09,StojnicGenLasso10}), and reaching to theoretical/algorithmic  computer science
(see, e.g., \cite{Moll12,CojOgh14,DinSlySun15,MezParZec02,MerMezZec06,FriWor05,AchPer04,AlmSor02,Wast12,LinWa04,Talbook11a,Talbook11b,NaiPraSha05,CopSor02,Ald01}) and many others. Moreover, they also include a host of different topics within each of these areas. Examples are again rather diverse, ranging from spherical/binary perceptrons in neural networks and machine learning (see, e.g., \cite{SchTir03,StojnicGardGen13,Tal05,Talbook11a,Talbook11b,Wendel,Winder,Winder61,Cover65,DonTan09Univ}), to compressed sensing and statistical regression in signal processing and information theory (see, e.g., \cite{StojnicUpper10,StojnicCSetam09,StojnicGenLasso10,BayMon10,BayMon10lasso,DonohoPol}), and satisfiability/assignment/matching problems in theoretical computer science and optimization and algorithms theory
(see, e.g., \cite{CojOgh14,DinSlySun14,DinSlySun15,DinSlySun14,MezParZec02,MerMezZec06,FriWor05,AchPer04}) and \cite{AlmSor02,Wast04,Wast09,Wast12,LinWa04,TalAssignment03,Talbook11a,Talbook11b,NaiPraSha05,CopSor02,CopSor99,Ald01,Ald92,MezPar87}).

Here, we are interested in yet another particular type of random structure, the so-called Hopfied models, that have gained a lot interest and, consequently, have been extensively studied in many different scientific fields (see, e.g.,
\cite{Hop82,PasFig78,Hebb49,PasShchTir94,ShchTir93,BarGenGueTan10,BarGenGueTan12,Tal98,StojnicMoreSophHopBnds10,BovGay98,Zhao11,Talbook11a,Talbook11b}). These models ware popularized in \cite{Hop82} (or, if viewed in different contexts, one can say even in \cite{PasFig78,Hebb49}). To provide their a mathematical description, one starts with the so-called Hamiltonian of the following type
\begin{equation}
\cH(G)=\sum_{i\neq j}^{n}  A_{ij}\x_i\x_j,\label{eq:ham}
\end{equation}
where
\begin{equation}
A_{ij}(G)=\sum_{l=1}^{m}  G_{il}G_{lj},\label{eq:hamAij}
\end{equation}
are the so-called quenched interactions and $G$ is an $m\times n$ matrix. For the mathematical convenience, we then combine (\ref{eq:ham}) and (\ref{eq:hamAij}) to obtain a more compact form
\begin{equation}
\cH(G)= \x^TG^TG\x,\label{eq:a0ham1}
\end{equation}
where we include the diagonal elements as well (as it will become clear throughout the presentation, such an inclusion contributes to the easiness of writing and the overall clarity while affecting, exactly, in no way the ensuing analytical considerations). Depending on the context where the models are used, the components of $G$ play more or less prominent role on their own (in other words, they are relevant by themselves rather than just as constitutive components of $A$). For example, within the neural networks contexts, one can often view $G$ as the matrix of the so-called stored patterns. On the other hand, we, in this paper view $G$ as a purely mathematical object. While the overall treatment is on mathematical objects level, we, often, refer to many relevant quantities utilizing the terminology frequently seen in statistical physics. Along the same lines, our prevalent interest is in the so-called thermodynamic limit, or, as it is called in mathematics, large $n$ linear regime. In other words, we are interested in scenarios where $n\rightarrow\infty$, and all other dimensions are linearly proportional to $n$ with proportionalities remaining constant as $n$ grows. For example,
we assume that $m$ is also large and $\alpha=\lim_{n\rightarrow\infty}\frac{m}{n}$ with $\alpha$ remaining constant as $n$ grows. The nature of the components of $G$ also fluctuates throughout the literature, again, depending on the context where the models are used. For example, in neural nets, the elements are typically binary spins, whereas in other fields their various continuous statistical forms are often seen as well. Here, we view them as i.i.d. standard normals, but the results hold for pretty much any statistics that can be pushed through the central limit theorem.

To characterize the behavior of the physical interpretations described through the above Hamiltonian, one usually looks at the partition function
\begin{equation}
Z^{\pm}(\beta,G)=\sum_{\x\in\cX}e^{\pm\beta\cH(G)},\label{eq:a0partfun}
\end{equation}
where $\beta>0$ is a parameter, typically called the \emph{inverse temperature}, and $\cX$ is a set typically consisting of the corners of the unit norm $n$-dimensional hypercube, i.e. $\cX=\{-\frac{1}{\sqrt{n}},\frac{1}{\sqrt{n}}\}^n$ (from the mathematical point of view though, $\cX$ can be tretated as actually any set). We adopt the convention of \cite{DeanRit01,StojnicMoreSophHopBnds10} and refer to the sceanario with ``$+$'' sign as the \emph{positive} Hopfield model, and to the one with ``$-$'' sign as the \emph{negative} Hopfiled model. Instead of directly handling the partition function, $Z^{\pm}(\beta,G)$, one often considers the following thermodynamic limit, scaled $\log$, version called (average) free energy
\begin{equation}
f^{\pm}(\beta)=\lim_{n\rightarrow\infty}\frac{\mE_G\log{(Z^{\pm}(\beta,G)})}{\beta n}
=\lim_{n\rightarrow\infty} \frac{\mE_G\log{(\sum_{\x\in\cX} e^{\pm\beta\cH(G)})}}{\beta n},\label{eq:a0logpartfunsqrt}
\end{equation}
where $\mE_G$ stands for the expectation with respect to $G$ (throughout the paper, $\mE$ always stands for the expectation and its subscript indicates the type of randomness with respect to which the expectation is taken). Of particular interest is often
the thermodynamic limit ground state regime, where, in addition to $n\rightarrow\infty$, one also has
$\beta\rightarrow\infty$ and
\begin{eqnarray}
f^{\pm}(\infty)   \triangleq  \lim_{\beta\rightarrow\infty}f^{\pm}(\beta) & = &
\lim_{\beta,n\rightarrow\infty}\frac{\mE_G\log{(Z^{\pm}(\beta,G)})}{\beta n}=
 \lim_{n\rightarrow\infty}\frac{\mE_G \max_{\x\in\cX}\pm \cH(G)}{n} \nonumber \\
 & = &\lim_{n\rightarrow\infty}\frac{\mE_G \max_{\x\in\cX}\pm \x^TG^TG\x}{n}.
  \label{eq:a0limlogpartfunsqrt}
\end{eqnarray}
The above corresponds to the standard Hopfield model treatment. As mentioned earlier, these models have a long history and have been studied extensively throughout the literature in a variety of fields (see, e.g., excellent references \cite{PasShchTir94,ShchTir93,BarGenGueTan10,BarGenGueTan12,Tal98,CriAmiGut86,SteKuh94,AmiGutSom87}).
Here, we find it convenient to study their so-called \emph{square-root} counterparts (see, e.g., \cite{StojnicMoreSophHopBnds10}). For such variants  all of the above remains in place with the exception that, instead of $\cH$ from (\ref{eq:a0ham1}), one now considers
\begin{equation}
\cH_{sq}(G)= \y^TG\x,\label{eq:ham1}
\end{equation}
and the corresponding partition function
\begin{equation}
Z_{sq}^{\pm}(\beta,G)=\sum_{\x\in\cX} \lp \sum_{\y\in\cY}e^{\beta\cH_{sq}(G)}\rp^{\pm1}.\label{eq:partfun}
\end{equation}
where $\cY=\mS^m$ (with $\mS^m$ being the $m$-dimensional unit sphere). The thermodynamic limit (average) free energy becomes
\begin{eqnarray}
f_{sq}^{\pm}(\beta) & = & \lim_{n\rightarrow\infty}\frac{\mE_G\log{(Z_{sq}^{\pm}(\beta,G)})}{\beta \sqrt{n}}
=\lim_{n\rightarrow\infty} \frac{\mE_G\log\lp \sum_{\x\in\cX} \lp \sum_{\y\in\cY}e^{\beta\cH_{sq}(G)}\rp^{\pm1}\rp}{\beta \sqrt{n}} \nonumber \\
& = &\lim_{n\rightarrow\infty} \frac{\mE_G\log\lp \sum_{\x\in\cX} \lp \sum_{\y\in\cY}e^{\beta\y^TG\x)}\rp^{\pm1}\rp}{\beta \sqrt{n}},\label{eq:logpartfunsqrt}
\end{eqnarray}
and its corresponding ground state special case
\begin{eqnarray}
f_{sq}^{\pm}(\infty)   \triangleq    \lim_{\beta\rightarrow\infty}f_{sq}^{\pm}(\beta) & = &
\lim_{\beta,n\rightarrow\infty}\frac{\mE_G\log{(Z_{sq}^{\pm}(\beta,G)})}{\beta \sqrt{n}}=
 \lim_{n\rightarrow\infty}\frac{\mE_G \max_{\x\in\cX}  \pm  \max_{\y\in\cY} \y^TG\x}{\sqrt{n}} \nonumber \\
& = &  \lim_{n\rightarrow\infty}\frac{\mE_G \max_{\x\in\cX}\pm \sqrt{\x^TG^TG\x}}{\sqrt{n}}.
  \label{eq:limlogpartfunsqrt}
\end{eqnarray}

Our main focus throughput the paper is precisely the free energy given in (\ref{eq:logpartfunsqrt}). However, as the  prevalent interest is to properly understand the, mathematically most challenging, ground state regime, we use the studying of the free energy, defined for general $\beta$,  to eventually deduce particular ground state behavior (\ref{eq:limlogpartfunsqrt}). Along the same lines, while the analysis  presented below accounts for any $\beta$, in the interest of easing the exposition, we, on occasion, neglect some of the terms with no significant role in the ground state regime.

While we study the square-root variant of the model, the free energy concentrations ensure that the obtained results immediately translate to the standard model as well. It is also important to note that, despite the fact that Hopfield models have been studied for several decades, analytical results particularly related to the problems of our interests here are rather scarce. For example, \cite{CriAmiGut86,SteKuh94,AmiGutSom87} cover the range of all $\beta$'s and offer a statistical physics type of treatment with replica analysis based estimates. On the other hand, mathematically rigorous treatments (see, e.g., \cite{PasShchTir94,ShchTir93,BarGenGueTan10,Tal98,Zhao11,BarGenGue11bip}), are usually restricted to certain $\alpha$ and/or $\beta$ regimes. Most of them typically relate to the high-temperature ($T$) regime (low inverse temperature $\beta=\frac{1}{T}$), where the so-called replica symmetric behavior is in place. The regimes, where the replica symmetry is expected not to hold, are, on the other hand, much harder. Moreover, the hardest ones among them are precisely the ground state ones of interest here. They assume zero temperature scenarios ($\beta\rightarrow\infty$) and  the available results for such regimes are even scarcer. In particular, utilizing the random duality theory (RDT) machinery, \cite{StojnicHopBnds10} established free energy upper bounds which also turned out to precisely match the replica symmetry based predictions. \cite{StojnicMoreSophHopBnds10} then went a bit further, used a \emph{lifted} RDT variant, and lowered the bounds of \cite{StojnicHopBnds10}, thereby showing that the replica symmetry predictions are \emph{not} tight and signaling that the assumed symmetry \emph{must} be broken.

We here first recognize the connection of the underlying statistical Hopfield models and bilinearly indexed (bli) random processes. Utilizing a recent progress made in studying such processes in \cite{Stojnicsflgscompyx23,Stojnicnflgscompyx23}, in \cite{Stojnicflrdt23} a \emph{fully lifted} random duality theory (fl RDT) was established. Keeping that in mind, we first observe that the square-root Hopfield models can be characterized via the fl RDT and, then, use a particular \emph{stationarized} fl RDT variant (called sfl RDT) to obtain desired characterization.

\section{Connecting Hopfield models and sfl RDT}
\label{sec:randlincons}

We start by observing that the free energy from (\ref{eq:logpartfunsqrt}),
\begin{eqnarray}
f_{sq}^{\pm}(\beta) & = &\lim_{n\rightarrow\infty} \frac{\mE_G\log\lp \sum_{\x\in\cX} \lp \sum_{\y\in\cY}e^{\beta\y^TG\x)}\rp^{\pm1}\rp}{\beta \sqrt{n}},\label{eq:hmsfl1}
\end{eqnarray}
is essentially a function of bli $\y^TG\x$. To make a connection with the bli related results of \cite{Stojnicsflgscompyx23,Stojnicnflgscompyx23}, we need a few definitions. For $r\in\mN$, $k\in\{1,2,\dots,r+1\}$, real scalars $s$, $x$, and $y$  such that $s^2=1$, $x>0$, and $y>0$, sets $\cX\subseteq \mR^n$ and $\cY\subseteq \mR^m$, function $f_S(\cdot):\mR^n\rightarrow R$, vectors $\p=[\p_0,\p_1,\dots,\p_{r+1}]$, $\q=[\q_0,\q_1,\dots,\q_{r+1}]$, and $\c=[\c_0,\c_1,\dots,\c_{r+1}]$ such that
 \begin{eqnarray}\label{eq:hmsfl2}
1=\p_0\geq \p_1\geq \p_2\geq \dots \geq \p_r\geq \p_{r+1} & = & 0 \nonumber \\
1=\q_0\geq \q_1\geq \q_2\geq \dots \geq \q_r\geq \q_{r+1} & = &  0,
 \end{eqnarray}
$\c_0=1$, $\c_{r+1}=0$, and ${\mathcal U}_k\triangleq [u^{(4,k)},\u^{(2,k)},\h^{(k)}]$  such that the components of  $u^{(4,k)}\in\mR$, $\u^{(2,k)}\in\mR^m$, and $\h^{(k)}\in\mR^n$ are i.i.d. standard normals, we set
  \begin{eqnarray}\label{eq:fl4}
\psi_{S,\infty}(f_{S},\calX,\calY,\p,\q,\c,x,y,s)  =
 \mE_{G,{\mathcal U}_{r+1}} \frac{1}{n\c_r} \log
\lp \mE_{{\mathcal U}_{r}} \lp \dots \lp \mE_{{\mathcal U}_2}\lp\lp\mE_{{\mathcal U}_1} \lp \lp Z_{S,\infty}\rp^{\c_2}\rp\rp^{\frac{\c_3}{\c_2}}\rp\rp^{\frac{\c_4}{\c_3}} \dots \rp^{\frac{\c_{r}}{\c_{r-1}}}\rp, \nonumber \\
 \end{eqnarray}
where
\begin{eqnarray}\label{eq:fl5}
Z_{S,\infty} & \triangleq & e^{D_{0,S,\infty}} \nonumber \\
 D_{0,S,\infty} & \triangleq  & \max_{\x\in\cX,\|\x\|_2=x} s \max_{\y\in\cY,\|\y\|_2=y}
 \lp f_{S}
+\sqrt{n}  y    \lp\sum_{k=2}^{r+1}c_k\h^{(k)}\rp^T\x
+ \sqrt{n} x \y^T\lp\sum_{k=2}^{r+1}b_k\u^{(2,k)}\rp \rp \nonumber  \\
 b_k & \triangleq & b_k(\p,\q)=\sqrt{\p_{k-1}-\p_k} \nonumber \\
c_k & \triangleq & c_k(\p,\q)=\sqrt{\q_{k-1}-\q_k}.
 \end{eqnarray}
 The  following theorem is a fundamental element of sfl RDT.
\begin{theorem} [\cite{Stojnicflrdt23}]
\label{thm:thmsflrdt1}  Assume that $n$ is large and that $\alpha=\lim_{n\rightarrow\infty} \frac{m}{n}$, where $\alpha$ remains constant as  $n$ grows. Let matrix $G\in\mR^{m\times n}$
 be comprised of i.i.d. standard normal elements and let $\cX\subseteq \mR^n$ and $\cY\subseteq \mR^m$ be two given sets. Assume the complete sfl RDT frame from \cite{Stojnicsflgscompyx23} and consider a given function $f(\x):R^n\rightarrow R$. Set
\begin{align}\label{eq:thmsflrdt2eq1}
   \psi_{rp} & \triangleq - \max_{\x\in\cX} s \max_{\y\in\cY} \lp f(\x)+\y^TG\x \rp
   \qquad  \mbox{(\bl{\textbf{random primal}})} \nonumber \\
   \psi_{rd}(\p,\q,\c,x,y,s) & \triangleq    \frac{x^2y^2}{2}    \sum_{k=2}^{r+1}\Bigg(\Bigg.
   \p_{k-1}\q_{k-1}
   -\p_{k}\q_{k}
  \Bigg.\Bigg)
\c_k
  - \psi_{S,\infty}(f(\x),\calX,\calY,\p,\q,\c,x,y,s) \hspace{.04in} \mbox{(\bl{\textbf{fl random dual}})}. \nonumber \\
 \end{align}
Let $\hat{\p_0}\rightarrow 1$, $\hat{\q_0}\rightarrow 1$, and $\hat{\c_0}\rightarrow 1$, $\hat{\p}_{r+1}=\hat{\q}_{r+1}=\hat{\c}_{r+1}=0$, and let the non-fixed parts of $\hat{\p}\triangleq \hat{\p}(x,y)$, $\hat{\q}\triangleq \hat{\q}(x,y)$, and  $\hat{\c}\triangleq \hat{\c}(x,y)$ be the solutions of the following system
\begin{eqnarray}\label{eq:thmsflrdt2eq2}
   \frac{d \psi_{rd}(\p,\q,\c,x,y,s)}{d\p} =  0,\quad
   \frac{d \psi_{rd}(\p,\q,\c,x,y,s)}{d\q} =  0,\quad
   \frac{d \psi_{rd}(\p,\q,\c,x,y,s)}{d\c} =  0.
 \end{eqnarray}
 Then,
\begin{eqnarray}\label{eq:thmsflrdt2eq3}
    \lim_{n\rightarrow\infty} \frac{\mE_G  \psi_{rp}}{\sqrt{n}}
  & = &
\min_{x>0} \max_{y>0} \lim_{n\rightarrow\infty} \psi_{rd}(\hat{\p}(x,y),\hat{\q}(x,y),\hat{\c}(x,y),x,y,s) \qquad \mbox{(\bl{\textbf{strong sfl random duality}})},\nonumber \\
 \end{eqnarray}
where $\psi_{S,\infty}(\cdot)$ is as in (\ref{eq:fl4})-(\ref{eq:fl5}).
 \end{theorem}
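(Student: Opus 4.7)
The plan is to recognize the random primal $\psi_{rp}$ as a functional of the bilinearly indexed Gaussian process $(\x,\y)\mapsto \y^T G\x$ and then import the fully lifted interpolation/comparison machinery developed in \cite{Stojnicsflgscompyx23,Stojnicnflgscompyx23,Stojnicflrdt23}. Concretely, I would first rescale so that the constraints $\|\x\|_2=x$ and $\|\y\|_2=y$ are visible and fix an $r$-level lifting with parameters $\p,\q,\c$ satisfying (\ref{eq:hmsfl2}). The base object is the nested free energy obtained by replacing the bilinear field $\y^T G\x$ with the comparison field
\begin{equation*}
\sqrt{n}\,y\,\Big(\sum_{k=2}^{r+1} c_k \h^{(k)}\Big)^T\x \;+\; \sqrt{n}\,x\,\y^T\Big(\sum_{k=2}^{r+1} b_k \u^{(2,k)}\Big),
\end{equation*}
which is exactly what appears inside $D_{0,S,\infty}$ in (\ref{eq:fl5}). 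This decouples the $\x$ and $\y$ dependencies, after which the nested $\log$--$\mE$ structure of $\psi_{S,\infty}$ is the standard Parisi-type recursion corresponding to the lifting weights $\c_k$.

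The first main step is to establish the \emph{weak} sfl random duality, i.e.\ the inequality direction
$\lim_{n\to\infty} \mE_G \psi_{rp}/\sqrt{n} \le \min_{x>0}\max_{y>0}\psi_{rd}(\p,\q,\c,x,y,s)$
for every admissible $(\p,\q,\c)$. This I would get by applying the fully lifted Gaussian interpolation of \cite{Stojnicsflgscompyx23}: one introduces an interpolating bilinear process that at $t=1$ is $\y^T G\x$ and at $t=0$ is the decoupled comparison above, and shows monotonicity of the nested free energy along $t$. The quadratic correction
$\tfrac{x^2 y^2}{2}\sum_{k=2}^{r+1}(\p_{k-1}\q_{k-1}-\p_k\q_k)\c_k$
is exactly the ``entropy'' compensation produced by the interpolation derivative, matching the first term of $\psi_{rd}$ in (\ref{eq:thmsflrdt2eq1}). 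The constraints on $x,y$ are handled by observing that the primal is invariant/monotone under the scalings $\x\mapsto \x/\|\x\|_2\cdot x$ and $\y\mapsto \y/\|\y\|_2\cdot y$, yielding the outer $\min_x\max_y$.

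The second, and the \emph{hard}, step is the \emph{stationarization}: upgrading this inequality to equality at the critical point $(\hat{\p},\hat{\q},\hat{\c})$ determined by (\ref{eq:thmsflrdt2eq2}). This is where the sfl (stationarized fully lifted) framework of \cite{Stojnicflrdt23} is used. The argument I would follow is to differentiate $\psi_{rd}(\p,\q,\c,x,y,s)$ in each of $\p,\q,\c$; the interpolation derivative identity shows that the stationary equations (\ref{eq:thmsflrdt2eq2}) are precisely the conditions under which the comparison overlaps concentrate on the sequence $(\hat{\p},\hat{\q})$ along the lifting, and the corresponding Guerra-type remainder term vanishes. Once the remainder vanishes, both directions of the inequality coincide at the stationary point, yielding (\ref{eq:thmsflrdt2eq3}).

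Finally, I would assemble the three ingredients: (i) reduction of $f(\x)+\y^TG\x$ to the bli framework, (ii) the weak sfl inequality from interpolation, and (iii) the matching lower bound at the stationary point. Passing $n\to\infty$ is routine thanks to standard Gaussian concentration for $\psi_{rp}/\sqrt{n}$ and $\psi_{S,\infty}$, both of which are $O(1/\sqrt{n})$-Lipschitz in $G$ by the Gaussian Poincaré inequality. The main obstacle, as noted, will be verifying that (\ref{eq:thmsflrdt2eq2}) indeed forces the cancellation of the non-matching remainder; for this I would rely directly on the stationarization computation of \cite{Stojnicflrdt23}, adapted to the bilinear (as opposed to purely linear) index structure supplied here by $\y^T G\x$.
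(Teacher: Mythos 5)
Your proposal is correct and follows essentially the same route as the paper: the paper's own proof simply observes that the $s=-1$ case is proven verbatim in \cite{Stojnicflrdt23} and that $s=1$ follows by a line-by-line sign adjustment, which is exactly the interpolation-plus-stationarization machinery you outline and ultimately defer to for the key cancellation step. Your sketch is a faithful (and more detailed) account of what that cited argument does, so there is no gap.
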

\begin{proof}
The $s=-1$ scenario was proven in \cite{Stojnicflrdt23}. The $s=1$ follows through a line-by-line repetition and trivial adjustment of the arguments of Section 3 of \cite{Stojnicflrdt23} with $s=-1$ replaced by $s=1$.
 \end{proof}
The following corollary is the operational version of the above theorem directly applicable to the Hopfield models of interest here.
\begin{corollary}
\label{cor:cor1}  Assume the setup of Theorem \ref{thm:thmsflrdt1}. Let all elements of $\cX\subseteq \mR^n$ and $\cY\subseteq \mR^m$ have unit norm. Set
\begin{align}\label{eq:thmsflrdt2eq1a0}
   \psi_{rp} & \triangleq - \max_{\x\in\cX} s \max_{\y\in\cY} \lp\y^TG\x \rp
   \qquad  \mbox{(\bl{\textbf{random primal}})} \nonumber \\
   \psi_{rd}(\p,\q,\c,x,y,s) & \triangleq    \frac{1}{2}    \sum_{k=2}^{r+1}\Bigg(\Bigg.
   \p_{k-1}\q_{k-1}
   -\p_{k}\q_{k}
  \Bigg.\Bigg)
\c_k
  - \psi_{S,\infty}(0,\calX,\calY,\p,\q,\c,1,1,s) \qquad \mbox{(\bl{\textbf{fl random dual}})}. \nonumber \\
 \end{align}
Let the non-fixed parts of $\hat{\p}$, $\hat{\q}$, and  $\hat{\c}$ be the solutions of the following system
\begin{eqnarray}\label{eq:thmsflrdt2eq2a0}
   \frac{d \psi_{rd}(\p,\q,\c,1,1,s)}{d\p} =  0,\quad
   \frac{d \psi_{rd}(\p,\q,\c,1,1,s)}{d\q} =  0,\quad
   \frac{d \psi_{rd}(\p,\q,\c,1,1,s)}{d\c} =  0.
 \end{eqnarray}
 Then,
\begin{eqnarray}\label{eq:thmsflrdt2eq3a0}
    \lim_{n\rightarrow\infty} \frac{\mE_G  \psi_{rp}}{\sqrt{n}}
  & = &
 \lim_{n\rightarrow\infty} \psi_{rd}(\hat{\p},\hat{\q},\hat{\c},1,1,s) \qquad \mbox{(\bl{\textbf{strong sfl random duality}})},\nonumber \\
 \end{eqnarray}
where $\psi_{S,\infty}(\cdot)$ is as in (\ref{eq:fl4})-(\ref{eq:fl5}).
 \end{corollary}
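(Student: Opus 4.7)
The plan is to obtain the corollary by directly specializing Theorem~\ref{thm:thmsflrdt1} to the setting where every vector in $\calX$ and $\calY$ already lies on the unit sphere, and no additional cost function $f$ is imposed. Concretely, I would invoke Theorem~\ref{thm:thmsflrdt1} with the choices $f(\x)\equiv 0$ and $\calX,\calY$ as given. The key observation is that the only value of $(x,y)$ for which the norm constraints $\|\x\|_2=x$ and $\|\y\|_2=y$ appearing inside $D_{0,S,\infty}$ in (\ref{eq:fl5}) can be satisfied by elements of $\calX$ and $\calY$ is $x=y=1$; for any other choice the inner feasible region is empty and $\psi_{rd}$ is degenerate, so the outer $\min_{x>0}\max_{y>0}$ is automatically pinned at the single point $(x,y)=(1,1)$.

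With these substitutions, every object in Theorem~\ref{thm:thmsflrdt1} simplifies transparently. The random primal $\psi_{rp}$ in (\ref{eq:thmsflrdt2eq1}) loses its $f(\x)$ term and becomes exactly the primal displayed in (\ref{eq:thmsflrdt2eq1a0}). Inside the random dual, the prefactor $\frac{x^2 y^2}{2}$ collapses to $\frac{1}{2}$, and $\psi_{S,\infty}(f(\x),\calX,\calY,\p,\q,\c,x,y,s)$ reduces to $\psi_{S,\infty}(0,\calX,\calY,\p,\q,\c,1,1,s)$, matching the corollary verbatim. Since $x$ and $y$ are now fixed, the stationarity system (\ref{eq:thmsflrdt2eq2}) is evaluated at $x=y=1$, giving the three equations (\ref{eq:thmsflrdt2eq2a0}), and (\ref{eq:thmsflrdt2eq3}) turns into (\ref{eq:thmsflrdt2eq3a0}).

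Since the argument is a pure specialization, I do not expect a genuine obstacle; the entire substance of the result is carried by Theorem~\ref{thm:thmsflrdt1}. The only mild point to handle carefully is the collapse of the outer $\min_{x>0}\max_{y>0}$, which I would make rigorous either by restricting the outer optimization to the domain where the inner feasible sets are nonempty (so that $(1,1)$ is the unique candidate), or by adopting the convention that a maximum over an empty set contributes $-\infty$ inside $D_{0,S,\infty}$, which renders $\psi_{rd}$ infeasible away from $(1,1)$. In either reading, the equality (\ref{eq:thmsflrdt2eq3a0}) is obtained as a one-line consequence of (\ref{eq:thmsflrdt2eq3}).
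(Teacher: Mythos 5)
Your proposal is correct and follows essentially the same route as the paper, which simply invokes Theorem \ref{thm:thmsflrdt1} with $f(\x)=0$ and notes that the unit-norm structure of $\cX$ and $\cY$ forces $x=y=1$. Your additional care in justifying the collapse of the outer $\min_{x>0}\max_{y>0}$ is a reasonable elaboration of the same one-line specialization.
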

\begin{proof}
A trivial direct consequence of Theorem \ref{thm:thmsflrdt1}, after taking $f(\x)=0$ and recognizing that sets $\cX$ and $\cY$ now have unit norm elements.
 \end{proof}

As noted in \cite{Stojnicflrdt23}, the above random primal problems' trivial concentrations enable various corresponding probabilistic variants of (\ref{eq:thmsflrdt2eq3}) and (\ref{eq:thmsflrdt2eq3a0}). We, however, skip stating such trivialities.

\section{Practical evaluations}
\label{sec:prac}

The results of Theorem \ref{thm:thmsflrdt1} and Corollary \ref{cor:cor1} have very elegant mathematical forms. Still, to see any formal utilization, one should be able to practically evaluate them. There are two main obstacles that one might face when trying to do so: (i) It is not, a priori, clear what should be the correct value for $r$. (ii) Set $\cY$ lacks a component-wise structure characterization and the decoupling over $\y$ is not necessarily overly obvious. For example, if the optimal $r$ is large (and the $r$-convergence of obtained results rather slow), or if the decoupling over $\y$ is technically complicated, the importance of the characterizations given in  Theorem \ref{thm:thmsflrdt1} and Corollary \ref{cor:cor1} would remain on a purely theoretical level and their elegance would prove as irrelevant regarding the practical usefulness. Below, we show that neither of these two obstacles poses a serious detriment to the overall success of the introduced methodology.

We start by recognizing that, based on the results of Corollary \ref{cor:cor1},  the key object of practical interest is the so-called \emph{random dual}
\begin{align}\label{eq:prac1}
    \psi_{rd}(\p,\q,\c,1,1,s) & \triangleq    \frac{1}{2}    \sum_{k=2}^{r+1}\Bigg(\Bigg.
   \p_{k-1}\q_{k-1}
   -\p_{k}\q_{k}
  \Bigg.\Bigg)
\c_k
  - \psi_{S,\infty}(0,\calX,\calY,\p,\q,\c,1,1,s). \nonumber \\
  & =   \frac{1}{2}    \sum_{k=2}^{r+1}\Bigg(\Bigg.
   \p_{k-1}\q_{k-1}
   -\p_{k}\q_{k}
  \Bigg.\Bigg)
\c_k
  - \frac{1}{n}\varphi(D^{(bin)}(s)) - \frac{1}{n}\varphi(D^{(sph)}(s)), \nonumber \\
  \end{align}
where analogously to (\ref{eq:fl4})-(\ref{eq:fl5})
  \begin{eqnarray}\label{eq:prac2}
\varphi(D,\c) & = &
 \mE_{G,{\mathcal U}_{r+1}} \frac{1}{\c_r} \log
\lp \mE_{{\mathcal U}_{r}} \lp \dots \lp \mE_{{\mathcal U}_3}\lp\lp\mE_{{\mathcal U}_2} \lp
\lp    e^{D}   \rp^{\c_2}\rp\rp^{\frac{\c_3}{\c_2}}\rp\rp^{\frac{\c_4}{\c_3}} \dots \rp^{\frac{\c_{r}}{\c_{r-1}}}\rp, \nonumber \\
  \end{eqnarray}
and
\begin{eqnarray}\label{eq:prac3}
D^{(bin)}(s) & = & \max_{\x\in\cX} \lp   s\sqrt{n}      \lp\sum_{k=2}^{r+1}c_k\h^{(k)}\rp^T\x  \rp \nonumber \\
  D^{(sph)}(s) & \triangleq  &   s \max_{\y\in\cY}
\lp \sqrt{n}  \y^T\lp\sum_{k=2}^{r+1}b_k\u^{(2,k)}\rp \rp.
 \end{eqnarray}
One then trivially has
\begin{eqnarray}\label{eq:prac4}
D^{(bin)}(s) & = & \max_{\x\in\cX}   \lp s\sqrt{n}      \lp\sum_{k=2}^{r+1}c_k\h^{(k)}\rp^T\x \rp =
       \sum_{i=1}^n \left |s\lp\sum_{k=2}^{r+1}c_k\h_i^{(k)}\rp \right |
=     \sum_{i=1}^n \left |\lp\sum_{k=2}^{r+1}c_k\h_i^{(k)}\rp \right |
=  \sum_{i=1}^n D^{(bin)}_i, \nonumber \\
 \end{eqnarray}
with
\begin{eqnarray}\label{eq:prac5}
D^{(bin)}_i(c_k)=\left |\lp\sum_{k=2}^{r+1}c_k\h_i^{(k)}\rp \right |.
\end{eqnarray}
Consequently,
  \begin{eqnarray}\label{eq:prac6}
\varphi(D^{(bin)}(s),\c) & = &
n \mE_{G,{\mathcal U}_{r+1}} \frac{1}{\c_r} \log
\lp \mE_{{\mathcal U}_{r}} \lp \dots \lp \mE_{{\mathcal U}_3}\lp\lp\mE_{{\mathcal U}_2} \lp
    e^{\c_2D_1^{(bin)}}  \rp\rp^{\frac{\c_3}{\c_2}}\rp\rp^{\frac{\c_4}{\c_3}} \dots \rp^{\frac{\c_{r}}{\c_{r-1}}}\rp
    = n\varphi(D_1^{(bin)}). \nonumber \\
   \end{eqnarray}
On the other hand, we also have
\begin{eqnarray}\label{eq:prac7}
   D^{(sph)}(s) & \triangleq  &   s \max_{\y\in\cY}
\lp \sqrt{n}  \y^T\lp\sum_{k=2}^{r+1}b_k\u^{(2,k)}\rp \rp
=  s\sqrt{n}  \left \|\sum_{k=2}^{r+1}b_k\u^{(2,k)}\right \|_2.
 \end{eqnarray}
We now utilize the following \emph{square root trick} introduced on numerous occasions in \cite{StojnicMoreSophHopBnds10,StojnicLiftStrSec13,StojnicGardSphErr13,StojnicGardSphNeg13}
\begin{eqnarray}\label{eq:prac8}
   D^{(sph)} (s)
& =  &  s\sqrt{n}  \left \|\sum_{k=2}^{r+1}b_k\u^{(2,k)}\right \|_2
=  s\sqrt{n}  \min_{\gamma} \lp \frac{\left \|\sum_{k=2}^{r+1}b_k\u^{(2,k)}\right \|_2^2}{4\gamma}+\gamma \rp \nonumber \\
 & =  & s\sqrt{n}  \min_{\gamma} \lp \frac{\sum_{i=1}^{m}\lp \sum_{k=2}^{r+1}b_k\u_i^{(2,k)}\rp^2}{4\gamma}+\gamma \rp.
 \end{eqnarray}
After scaling $\gamma=\gamma_{sq}\sqrt{n}$, (\ref{eq:prac8}) becomes
\begin{eqnarray}\label{eq:prac9}
   D^{(sph)}(s)
  & =  & s\sqrt{n}  \min_{\gamma_{sq}} \lp \frac{\sum_{i=1}^{m}\lp \sum_{k=2}^{r+1}b_k\u_i^{(2,k)}\rp^2}{4\gamma_{sq}\sqrt{n}}+\gamma_{sq}\sqrt{n} \rp = s \min_{\gamma_{sq}} \lp \frac{\sum_{i=1}^{m}\lp \sum_{k=2}^{r+1}b_k\u_i^{(2,k)}\rp^2}{4\gamma_{sq}}+\gamma_{sq}n \rp \nonumber \\
  & =  &  s \min_{\gamma_{sq}} \lp \sum_{i=1}^{m} D_i^{(sph)}+\gamma_{sq}n \rp, \nonumber \\
 \end{eqnarray}
with
\begin{eqnarray}\label{eq:prac10}
   D_i^{(sph)}(b_k)= \frac{\lp \sum_{k=2}^{r+1}b_k\u_i^{(2,k)}  \rp^2}{4\gamma_{sq}}.
 \end{eqnarray}
Below, we distinguish two types of Hopfield models: (i) the positive ones (obtained for $s=1$); and (ii) the negative ones (obtained for $s=-1$).

\subsection{Positive square root Hopfield models ($s=1$)}
\label{sec:pos}

Connecting the ground state energy of the positive square root Hopfield models, $f^+_{sq}$ given in (\ref{eq:limlogpartfunsqrt}), and the random primal, $\psi_{rp}(\cdot)$ given in Corollary \ref{cor:cor1}, we have
 \begin{eqnarray}
f_{sq}^{+}(\infty)
& = &  \lim_{n\rightarrow\infty}\frac{\mE_G \max_{\x\in\cX} \sqrt{\x^TG^TG\x}}{\sqrt{n}} =
 \lim_{n\rightarrow\infty}\frac{\mE_G \max_{\x\in\cX}    \max_{\y\in\cY} \y^TG\x}{\sqrt{n}} \nonumber \\
& =  &
    -\lim_{n\rightarrow\infty} \frac{\mE_G  \psi_{rp}}{\sqrt{n}}
   =
 -\lim_{n\rightarrow\infty} \psi_{rd}(\hat{\p},\hat{\q},\hat{\c},1,1,1),
  \label{eq:prac11}
\end{eqnarray}
where the non-fixed parts of $\hat{\p}$, $\hat{\q}$, and  $\hat{\c}$ are the solutions of the following system
\begin{eqnarray}\label{eq:prac12}
   \frac{d \psi_{rd}(\p,\q,\c,1,1,1)}{d\p}  =  0,\quad
   \frac{d \psi_{rd}(\p,\q,\c,1,1,1)}{d\q} =  0,\quad
   \frac{d \psi_{rd}(\p,\q,\c,1,1,1)}{d\c} =  0.
 \end{eqnarray}
Moreover, relying on (\ref{eq:prac1})-(\ref{eq:prac10}), we have
 \begin{eqnarray}
 \lim_{n\rightarrow\infty} \psi_{rd}(\hat{\p},\hat{\q},\hat{\c},1,1,1) =  \bar{\psi}_{rd}(\hat{\p},\hat{\q},\hat{\c},\hat{\gamma}_{sq},1,1,1),
  \label{eq:prac12a}
\end{eqnarray}
where
\begin{eqnarray}\label{eq:prac13}
    \bar{\psi}_{rd}(\p,\q,\c,\gamma_{sq},1,1,1)   & = &  \frac{1}{2}    \sum_{k=2}^{r+1}\Bigg(\Bigg.
   \p_{k-1}\q_{k-1}
   -\p_{k}\q_{k}
  \Bigg.\Bigg)
\c_k
\nonumber \\
& &  - \varphi(D_1^{(bin)}(c_k(\p,\q)),\c) -\gamma_{sq}- \alpha\varphi(D_1^{(sph)}(b_k(\p,\q)),\c),
  \end{eqnarray}
with $\varphi(D_1^{(bin)}(c_k(\p,\q)),\c)$ (based on (\ref{eq:prac2}), (\ref{eq:prac5}), and (\ref{eq:prac6})) given by
{\small   \begin{align}\label{eq:prac14}
\varphi(D_1^{(bin)}(c_k(\p,\q)),\c) & =
 \mE_{{\mathcal U}_{r+1}} \frac{1}{\c_r} \log
\lp \mE_{{\mathcal U}_{r}} \lp \dots \lp \mE_{{\mathcal U}_3}\lp\lp\mE_{{\mathcal U}_2} \lp
    e^{\c_2 \left | \sum_{k=2}^{r+1}c_k(\p,\q)\h_i^{(k)} \right |}  \rp\rp^{\frac{\c_3}{\c_2}}\rp\rp^{\frac{\c_4}{\c_3}} \dots \rp^{\frac{\c_{r}}{\c_{r-1}}}\rp, \nonumber \\
  \end{align}}

\noindent $\varphi(D_1^{(sph)}(b_k(\p,\q)),\c)$ (based on (\ref{eq:prac2}), (\ref{eq:prac9}), and (\ref{eq:prac10})) given by
 {\small \begin{align}\label{eq:prac15}
\varphi(D_1^{(sph)}(b_k(\p,\q)),\c) & =
 \mE_{{\mathcal U}_{r+1}} \frac{1}{\c_r} \log
\lp \mE_{{\mathcal U}_{r}} \lp \dots \lp \mE_{{\mathcal U}_3}\lp\lp\mE_{{\mathcal U}_2} \lp
    e^{\c_2 \frac{\lp \sum_{k=2}^{r+1}b_k(\p,\q)\u_i^{(2,k)}  \rp^2}{4\gamma_{sq}}}  \rp\rp^{\frac{\c_3}{\c_2}}\rp\rp^{\frac{\c_4}{\c_3}} \dots \rp^{\frac{\c_{r}}{\c_{r-1}}}\rp, \nonumber \\
\end{align}}

\noindent and with $\hat{\gamma}_{sq}$ and  the non-fixed parts of $\hat{\p}$, $\hat{\q}$, and  $\hat{\c}$ being the solutions of the following system
\begin{eqnarray}\label{eq:prac16}
   \frac{d \bar{\psi}_{rd}(\p,\q,\c,\gamma_{sq},1,1,1)}{d\p} & = & 0 \nonumber \\
   \frac{d \bar{\psi}_{rd}(\p,\q,\c,\gamma_{sq},1,1,1)}{d\q} & = & 0 \nonumber \\
   \frac{d \bar{\psi}_{rd}(\p,\q,\c,\gamma_{sq},1,1,1)}{d\c} & = & 0 \nonumber \\
   \frac{d \bar{\psi}_{rd}(\p,\q,\c,\gamma_{sq},1,1,1)}{d\gamma_{sq}} & = & 0,
 \end{eqnarray}
 and consequently
\begin{eqnarray}\label{eq:prac17}
c_k(\hat{\p},\hat{\q})  & = & \sqrt{\hat{\q}_{k-1}-\hat{\q}_k} \nonumber \\
b_k(\hat{\p},\hat{\q})  & = & \sqrt{\hat{\p}_{k-1}-\hat{\p}_k}.
 \end{eqnarray}
Connecting  (\ref{eq:prac11}) and (\ref{eq:prac12a}), we then find
 \begin{eqnarray}
f_{sq}^{+}(\infty)
& = &  \lim_{n\rightarrow\infty}\frac{\mE_G \max_{\x\in\cX}\pm \sqrt{\x^TG^TG\x}}{\sqrt{n}} =
 \lim_{n\rightarrow\infty}\frac{\mE_G \max_{\x\in\cX}    \max_{\y\in\cY} \y^TG\x}{\sqrt{n}} \nonumber \\
    &  = &
 -\lim_{n\rightarrow\infty} \psi_{rd}(\hat{\p},\hat{\q},\hat{\c},1,1,1)
 = -  \bar{\psi}_{rd}(\hat{\p},\hat{\q},\hat{\c},\hat{\gamma}_{sq},1,1,1) \nonumber \\
 & = &   -\frac{1}{2}    \sum_{k=2}^{r+1}\Bigg(\Bigg.
   \hat{\p}_{k-1}\hat{\q}_{k-1}
   -\hat{\p}_{k}\hat{\q}_{k}
  \Bigg.\Bigg)
\hat{\c}_k
  + \varphi(D_1^{(bin)}(c_k(\hat{\p},\hat{\q})),\c) + \hat{\gamma}_{sq} + \alpha\varphi(D_1^{(sph)}(b_k(\hat{\p},\hat{\q})),\c). \nonumber \\
  \label{eq:prac18}
\end{eqnarray}
The following theorem summarizes the above results.

\begin{theorem}
  \label{thme:thmprac1}
  Assume the complete sfl RDT setup of \cite{Stojnicsflgscompyx23}. Consider large $n$ linear regime with $\alpha=\lim_{n\rightarrow\infty} \frac{m}{n}$ and let $\varphi(\cdot)$ and $\bar{\psi}(\cdot)$ be as given in (\ref{eq:prac2}) and (\ref{eq:prac13}), respectively. Also, let the ``fixed'' parts of $\hat{\p}$, $\hat{\q}$, and $\hat{\c}$ satisfy $\hat{\p}_1\rightarrow 1$, $\hat{\q}_1\rightarrow 1$, $\hat{\c}_1\rightarrow 1$, $\hat{\p}_{r+1}=\hat{\q}_{r+1}=\hat{\c}_{r+1}=0$, and let the ``non-fixed'' parts of $\hat{\p}_k$, $\hat{\q}_k$, and $\hat{\c}_k$ ($k\in\{2,3,\dots,r\}$) be the solutions of (\ref{eq:prac16}). Moreover, let $c_k(\hat{\p},\hat{\q})$ and $b_k(\hat{\p},\hat{\q})$ be as in (\ref{eq:prac17}). Then
 \begin{eqnarray}
f_{sq}^{+}(\infty)
& = &     -\frac{1}{2}    \sum_{k=2}^{r+1}\Bigg(\Bigg.
   \hat{\p}_{k-1}\hat{\q}_{k-1}
   -\hat{\p}_{k}\hat{\q}_{k}
  \Bigg.\Bigg)
\hat{\c}_k
  + \varphi(D_1^{(bin)}(c_k(\hat{\p},\hat{\q})),\hat{\c}) + \hat{\gamma}_{sq} + \alpha\varphi(D_1^{(sph)}(b_k(\hat{\p},\hat{\q})),\hat{\c}). \nonumber \\
  \label{eq:thmprac1eq1}
\end{eqnarray}
\end{theorem}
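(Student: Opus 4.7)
The statement essentially concatenates, at the stationary point, the chain of reformulations already carried out in (\ref{eq:prac11})--(\ref{eq:prac18}). The approach is to start from the ground state expression for $f_{sq}^{+}(\infty)$ and transform it step by step until only a componentwise scalar optimization remains. First, specializing Corollary \ref{cor:cor1} to $s=1$ and noting that
\begin{equation*}
\max_{\x\in\cX}\sqrt{\x^T G^T G\x}=\max_{\x\in\cX}\max_{\y\in\cY}\y^T G\x,
\end{equation*}
where $\cY=\mS^m$, identifies $-\mE_G\psi_{rp}/\sqrt{n}$ with $f_{sq}^{+}(\infty)$ in the $n\to\infty$ limit. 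The strong sfl random duality statement of Corollary \ref{cor:cor1} then replaces it by $-\lim_{n\to\infty}\psi_{rd}(\hat\p,\hat\q,\hat\c,1,1,1)$ with the stationarity system (\ref{eq:thmsflrdt2eq2a0}) selecting $(\hat\p,\hat\q,\hat\c)$. This is the entry point: all subsequent work reshapes $\psi_{S,\infty}$.

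Next, I would split the function $\psi_{S,\infty}(0,\cX,\cY,\p,\q,\c,1,1,1)$ into its binary and spherical pieces, as in (\ref{eq:prac1})--(\ref{eq:prac3}), using that the coupling between $\x$ and $\y$ has been eliminated because $f(\x)\equiv 0$ and the outer objective separates. For the binary part, the maximization over $\x\in\{\pm 1/\sqrt{n}\}^n$ factors coordinatewise, giving $D^{(bin)}(1)=\sum_{i=1}^n D_i^{(bin)}$ as in (\ref{eq:prac4})--(\ref{eq:prac5}); since the $\h_i^{(k)}$ are i.i.d.\ across $i$, the nested-expectation-and-log structure of $\varphi$ (\ref{eq:prac2}) converts a sum in the exponent into a product inside the $\log$ and yields the clean factorization $\varphi(D^{(bin)}(1),\c)=n\varphi(D_1^{(bin)},\c)$ recorded in (\ref{eq:prac6}).

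The spherical part is the genuinely delicate step and is the main obstacle. The set $\cY=\mS^m$ is not componentwise, so no analogous factorization is immediately available. The remedy is the square root trick (\ref{eq:prac8}): write
\begin{equation*}
\|v\|_2=\min_{\gamma>0}\Bigl(\tfrac{\|v\|_2^2}{4\gamma}+\gamma\Bigr),
\end{equation*}
apply it to $v=\sum_{k=2}^{r+1}b_k\u^{(2,k)}$, and rescale $\gamma=\gamma_{sq}\sqrt{n}$ as in (\ref{eq:prac9}). This turns the sphere-norm into a quadratic functional whose $i$-summands $D_i^{(sph)}$ are i.i.d.\ across $i$; exactly the same factorization used for the binary part then gives a spherical analogue $\varphi(D^{(sph)},\c)=m\varphi(D_1^{(sph)},\c)$. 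The auxiliary variable $\gamma_{sq}$ is added as an extra optimization variable to the system, which is why (\ref{eq:prac16}) acquires the fourth stationarity condition $\partial\bar\psi_{rd}/\partial\gamma_{sq}=0$. One must verify that the min over $\gamma_{sq}$ can be pulled outside the $\mE_G$ and the nested logs: this is standard because the objective in $\gamma_{sq}$ is convex and the minimizer concentrates, so interchanging min and $\mE$ is valid in the large $n$ limit.

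Finally, combining these factorizations, using $m/n\to\alpha$ to convert $m\varphi(D_1^{(sph)},\c)/n$ into $\alpha\varphi(D_1^{(sph)},\c)$, and substituting $c_k(\hat\p,\hat\q)$ and $b_k(\hat\p,\hat\q)$ from (\ref{eq:prac17}), yields $\bar{\psi}_{rd}(\hat\p,\hat\q,\hat\c,\hat\gamma_{sq},1,1,1)$ in the form (\ref{eq:prac13}). Negating, as dictated by (\ref{eq:prac11}), produces exactly (\ref{eq:thmprac1eq1}) and completes the proof. No genuinely new ingredient beyond Corollary \ref{cor:cor1} and the square-root trick is required; the content of the theorem is precisely the statement that these reductions are consistent with the sfl stationarity conditions.
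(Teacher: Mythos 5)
Your proposal is correct and follows essentially the same route as the paper: the paper's proof of Theorem \ref{thme:thmprac1} simply points back to the chain (\ref{eq:prac11})--(\ref{eq:prac18}), which consists of exactly the steps you describe (Corollary \ref{cor:cor1} with $s=1$, the binary/spherical split of $\psi_{S,\infty}$, coordinatewise factorization of the binary part, and the square-root trick with the rescaled $\gamma_{sq}$ appended to the stationarity system). Your added remark on interchanging the $\gamma_{sq}$-minimization with the expectation is a small justification the paper leaves implicit, but it does not change the argument.
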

\begin{proof}
Follows from the previous discussion, Theorem \ref{thm:thmsflrdt1}, Corollary \ref{cor:cor1}, and the sfl RDT machinery presented in \cite{Stojnicnflgscompyx23,Stojnicsflgscompyx23,Stojnicflrdt23}.
\end{proof}

\subsubsection{Numerical evaluations}
\label{sec:nuemrical}

Theorem \ref{thme:thmprac1} is sufficient to conduct numerical evaluations and obtain concrete values for the studied free energy. There are several analytical results that can be helpful in that regard and that we present below. As it will be soon clear, they mostly relate to the evaluation of the so-called \emph{spherical} part $\varphi(D_i^{(sph)}(c_k(\p,\q)),\c)$, where one can obtain convenient closed form expressions. For the easiness of the exposition, we start with the smallest possible $r$ and proceed by incrementally increasing it by $1$. Such a path eventually enables seeing in a neat and systematic way how the whole lifting mechanism  \emph{astonishingly quickly} converges. For the numerical values concreteness, we, on occasion, specialize the considerations to the most famous, so-called, square case, i.e., to the case $\alpha=1$.

\underline{1) \textbf{\emph{$r=1$ -- first level of lifting:}}} For $r=1$ one has that $\hat{\p}_1\rightarrow 1$ and $\hat{\q}_1\rightarrow 1$ which together with $\hat{\p}_{r+1}=\hat{\p}_{2}=\hat{\q}_{r+1}=\hat{\q}_{2}=0$, and $\hat{\c}_{2}\rightarrow 0$ gives
\begin{align}\label{eq:prac19}
    -\bar{\psi}_{rd}(\hat{\p},\hat{\q},\hat{\c},\gamma_{sq},1,1,1)   & =   -\frac{1}{2}
\c_2
  + \frac{1}{\c_2}\log\lp \mE_{{\mathcal U}_2} e^{\c_2|\sqrt{1-0}\h_1^{(2)} |}\rp +\gamma_{sq}
+ \alpha\frac{1}{\c_2}\log\lp \mE_{{\mathcal U}_2} e^{\c_2\frac{(\sqrt{1-0}\u_1^{(2,2)})^2}{4\gamma_{sq}}}\rp \nonumber \\
& \rightarrow
   \frac{1}{\c_2}\log\lp 1+ \mE_{{\mathcal U}_2} \c_2|\sqrt{1-0}\h_1^{(2)} |\rp +\gamma_{sq}
+ \alpha\frac{1}{\c_2}\log\lp 1+ \mE_{{\mathcal U}_2} \c_2\frac{(\sqrt{1-0}\u_1^{(2,2)})^2}{4\gamma_{sq}}\rp \nonumber \\
& \rightarrow
   \frac{1}{\c_2}\log\lp 1+ \c_2\sqrt{\frac{2}{\pi}}\rp +\gamma_{sq}
+ \alpha\frac{1}{\c_2}\log\lp 1+ \frac{\c_2}{4\gamma_{sq}}\rp \nonumber \\
& \rightarrow
  \sqrt{\frac{2}{\pi}}+\gamma_{sq}
+  \frac{\alpha}{4\gamma_{sq}}.
  \end{align}
One then easily finds $\hat{\gamma}_{sq}=\frac{\sqrt{\alpha}}{2}$ and
\begin{align}\label{eq:prac20}
  f^{+,1}_{sq}(\infty)=- \bar{\psi}_{rd}(\hat{\p},\hat{\q},\hat{\c},\hat{\gamma}_{sq},1,1,1)   & =
   \sqrt{\frac{2}{\pi}}+\sqrt{\alpha},
  \end{align}
which for $\alpha=1$ becomes
\begin{align}\label{eq:prac21}
  f^{+,1}_{sq}(\infty) =
   \sqrt{\frac{2}{\pi}}+1\rightarrow \bl{\mathbf{1.7979}}.
  \end{align}

\underline{2) \textbf{\emph{$r=2$ -- second level of lifting:}}} To make the presentation smoother, we split the second level of lifting into two subparts: (i) \emph{partial} second level of lifting; and (ii) \emph{full} second level of lifting.

For the \emph{partial} part, we have, when $r=2$, that $\hat{\p}_1\rightarrow 1$ and $\hat{\q}_1\rightarrow 1$, $\hat{\p}_{2}=\hat{\q}_{2}=0$, and $\hat{\p}_{r+1}=\hat{\p}_{3}=\hat{\q}_{r+1}=\hat{\q}_{3}=0$ but in general  $\hat{\c}_{2}\neq 0$. As above, one again has
\begin{align}\label{eq:prac22}
   - \bar{\psi}_{rd}(\hat{\p},\hat{\q},\c,\gamma_{sq},1,1,1)   & =  - \frac{1}{2}
\c_2
  + \frac{1}{\c_2}\log\lp \mE_{{\mathcal U}_2} e^{\c_2|\sqrt{1-0}\h_1^{(2)} |}\rp + \gamma_{sq}
+ \alpha\frac{1}{\c_2}\log\lp \mE_{{\mathcal U}_2} e^{\c_2\frac{(\sqrt{1-0}\u_1^{(2,2)})^2}{4\gamma_{sq}}}\rp.
   \end{align}
After taking the derivatives with respect to $\c_2$ and $\gamma_{sq}$, one, for $\alpha=1$, finds $\hat{\gamma}_{sq}=0.6173$ and $\hat{\c}_2=0.4246$, which gives
\begin{align}\label{eq:prac23}
\hspace{-2in}(\mbox{\emph{partial} second level:}) \qquad \qquad   f^{+,2}_{sq}(\infty) \rightarrow \bl{\mathbf{1.7832}}.
  \end{align}

For the \emph{full} part, we have the same setup as above with the exception that now, in general, (in addition to $\hat{\c}_{2}\neq 0$) one has $\p_2\neq0$ and $\q_2\neq0$. As above, we write
\begin{align}\label{eq:prac24}
   - \bar{\psi}_{rd}(\p,\q,\c,\gamma_{sq},1,1,1)   & =  - \frac{1}{2}
(1-\p_2\q_2)\c_2
  + \frac{1}{\c_2}\mE_{{\mathcal U}_3}\log\lp \mE_{{\mathcal U}_2} e^{\c_2|\sqrt{1-\q_2}\h_1^{(2)} +\sqrt{\q_2}\h_1^{(3)} |}\rp \nonumber \\
& \quad    + \gamma_{sq}
+ \alpha\frac{1}{\c_2}\mE_{{\mathcal U}_3} \log\lp \mE_{{\mathcal U}_2} e^{\c_2\frac{(\sqrt{1-\p_2}\u_i^{(2,2)}+\sqrt{\p_2}\u_i^{(2,3)})^2}{4\gamma_{sq}}}\rp \nonumber \\
& =  - \frac{1}{2}
(1-\p_2\q_2)\c_2
  + \frac{1}{\c_2}\mE_{{\mathcal U}_3}\log\lp \mE_{{\mathcal U}_2} e^{\c_2|\sqrt{1-\q_2}\h_1^{(2)} +\sqrt{\q_2}\h_1^{(3)} |}\rp \nonumber \\& \quad    + \gamma_{sq}
+ \alpha \lp -\frac{1}{2\c_2} \log \lp \frac{2\gamma_{sq}-\c_2(1-\p_2)}{2\gamma_{sq}} \rp  +  \frac{\p_2}{2(2\gamma_{sq}-\c_2(1-\p_2))}   \rp
   \end{align}
After taking the derivatives one, for $\alpha=1$, finds $\hat{\gamma}_{sq}=0.6941$, $\p_2=0.5315$, $\q_2=0.6320$, and $\hat{\c}_2=1.0056$, which gives
\begin{align}\label{eq:prac25}
\hspace{-2in}(\mbox{\emph{full} second level:}) \qquad \qquad  f^{+,2}_{sq}(\infty) \rightarrow \bl{\mathbf{1.7801}}.
  \end{align}

\underline{3) \textbf{\emph{$r=3$ -- third level of lifting:}}} Since the above already showed the idea behind the partial lifting, we now  immediately  consider \emph{full} third level of lifting and at the end discuss the specialization to the partial part. For $r=3$, we have that $\hat{\p}_1\rightarrow 1$ and $\hat{\q}_1\rightarrow 1$  as well as  $\hat{\p}_{r+1}=\hat{\p}_{4}=\hat{\q}_{r+1}=\hat{\q}_{4}=0$. As above, one again has
 \begin{align}\label{eq:prac26}
   - \bar{\psi}_{rd}(\p,\q,\c,\gamma_{sq},1,1,1)   & =  - \frac{1}{2}
(1-\p_2\q_2)\c_2 - \frac{1}{2}
(\p_2\q_2-\p_3\q_3)\c_3 \nonumber \\
&\quad  + \frac{1}{\c_3}\mE_{{\mathcal U}_4}\log\lp \mE_{{\mathcal U}_3} \lp  \mE_{{\mathcal U}_2} e^{\c_2|\sqrt{1-\q_2}\h_1^{(2)} +\sqrt{\q_2-\q_3}\h_1^{(3)}+\sqrt{\q_3}\h_1^{(4)}|} \rp^{\frac{\c_3}{\c_2}} \rp \nonumber \\
& \quad    + \gamma_{sq}
+ \alpha\frac{1}{\c_3}\mE_{{\mathcal U}_4} \log\lp \mE_{{\mathcal U}_4} \lp \mE_{{\mathcal U}_2} e^{\c_2\frac{(\sqrt{1-\p_2}\u_i^{(2,2)}+\sqrt{\p_2-\p_3}\u_i^{(2,3)}+\sqrt{\p_3}\u_i^{(2,4)})^2}{4\gamma_{sq}}}\rp^{\frac{\c_3}{\c_2}} \rp \nonumber \\
   & =  - \frac{1}{2}
(1-\p_2\q_2)\c_2 - \frac{1}{2}
(\p_2\q_2-\p_3\q_3)\c_3 \nonumber \\
&\quad  + \frac{1}{\c_3}\mE_{{\mathcal U}_4}\log\lp \mE_{{\mathcal U}_3} \lp  \mE_{{\mathcal U}_2} e^{\c_2|\sqrt{1-\q_2}\h_1^{(2)} +\sqrt{\q_2-\q_3}\h_1^{(3)}+\sqrt{\q_3}\h_1^{(4)}|} \rp^{\frac{\c_3}{\c_2}} \rp \nonumber \\
& \quad    + \gamma_{sq}
+ \alpha
\Bigg(\Bigg. -\frac{1}{2\c_2} \log \lp \frac{2\gamma_{sq}-\c_2(1-\p_2)}{2\gamma_{sq}} \rp \nonumber \\
& \quad -\frac{1}{2\c_3} \log \lp \frac{2\gamma_{sq}-\c_2(1-\p_2)-\c_3(\p_2-\p_3)}{2\gamma_{sq}-\c_2(1-\p_2)} \rp  \nonumber \\
& \quad +  \frac{\p_3}{2(2\gamma_{sq}-\c_2(1-\p_2)-\c_3(\p_2-\p_3))}   \Bigg.\Bigg)
   \end{align}
Taking the derivatives while fixing $\p_3=\q_3=0$ one obtains the \emph{partial} third level lifting and in particular for $\alpha=1$
\begin{align}\label{eq:prac27}
\hspace{-2in}(\mbox{\emph{partial} third level:}) \qquad \qquad f^{+,3}_{sq}(\infty) \rightarrow \bl{\mathbf{1.7791}}.
  \end{align}
On the other hand, taking the derivatives without fixing $\p_3=\q_3=0$ one obtains the \emph{full} third level lifting and in particular for $\alpha=1$
\begin{align}\label{eq:prac28}
\hspace{-2in}(\mbox{\emph{full} third level:}) \qquad \qquad   f^{+,3}_{sq}(\infty) \rightarrow \bl{\mathbf{1.7788}}.
  \end{align}
All relevant quantities for the third level of lifting (for both partial and full scenarios), together with the first and second level corresponding ones are systematically shown in Table \ref{tab:2rsbunifiedsqrtpos}.

\begin{table}[h]
\caption{$r$-sfl RDT parameters; \underline{positive} \emph{square-root} Hopfield model; $\alpha=1$; $\hat{\c}_1\rightarrow 1$; $n,\beta\rightarrow\infty$}\vspace{.1in}
\centering
\def\arraystretch{1.2}
\begin{tabular}{||l||c||c|c|c||c|c|c||c|c||c||}\hline\hline
 \hspace{-0in}$r$-sfl RDT                                             & $\hat{\gamma}_{sq}$    & $\hat{\p}_3$ & $\hat{\p}_2$ & $\hat{\p}_1`$        & $\hat{\q}_3$ & $\hat{\q}_2$  & $\hat{\q}_1$ &  $\hat{\c}_3$   & $\hat{\c}_2$    & $f_{sq}^{+,r}$  \\ \hline\hline
$\mathbf{1}$ (full)                                      & $0.5$ & $0$ & $0$  & $\rightarrow 1$  & $0$ & $0$ & $\rightarrow 1$ &  $\rightarrow 0$
 &  $\rightarrow 0$  & \bl{$\mathbf{1.7979}$} \\ \hline\hline
 $\mathbf{2}$ (partial)                                      & $0.6173$ & $0$ & $0$ & $\rightarrow 1$ & $0$ & $0$ & $\rightarrow 1$ &  $\rightarrow 0$
 &  $0.4246$   & \bl{$\mathbf{1.7832}$} \\ \hline
  $\mathbf{2}$ (full)                                      & $0.6941$ & $0$ & $0.5315$ & $\rightarrow 1$ & $0$ & $0.6320$ & $\rightarrow 1$ &  $\rightarrow 0$
 &  $1.0056$   & \bl{$\mathbf{1.7801}$}  \\ \hline\hline
 $\mathbf{3}$ (partial)                                      & $0.7434$ & $0$ & $0.7510$  & $\rightarrow 1$ & $0$ & $0.8397$ & $\rightarrow 1$ &  $0.2508$
 &  $1.7762$   & \bl{$\mathbf{1.7791}$}  \\ \hline
  $\mathbf{3}$ (full)                                      & $0.7752$ & $0.36$ & $0.8504$  & $\rightarrow 1$ & $0.4427$ & $0.9160$ & $\rightarrow 1$ &  $0.5044$
 &  $2.6825$   & \bl{$\mathbf{1.7788}$}  \\ \hline\hline
\end{tabular}
\label{tab:2rsbunifiedsqrtpos}
\end{table}

\underline{4) \textbf{General \emph{$r$-th level of lifting:}}}  For the completeness, we state the form of $\bar{\psi}_{rd}(\p,\q,\c,\gamma_{sq},1,1,1) $ obtained for general $r$. Following the above procedures, we basically have
 \begin{align}\label{eq:prac29}
   - \bar{\psi}_{rd}(\p,\q,\c,\gamma_{sq},1,1,1)   & =  - \frac{1}{2}
\sum_{k=2}^{r+1}(\p_{k-1}\q_{k-1}-\p_k\q_k)\c_k   \nonumber \\
&\quad  + \frac{1}{\c_r}\mE_{{\mathcal U}_{r+1}}\log\lp \mE_{{\mathcal U}_r} \lp \dots \lp \mE_{{\mathcal U}_3} \lp  \mE_{{\mathcal U}_2} e^{\c_2|\sum_{k=2}^{r+1}\sqrt{\q_{k-1}-\q_k}\h_1^{(k)} |} \rp^{\frac{\c_3}{\c_2}} \rp^{\frac{\c_4}{\c_{3}}} \dots \rp^{\frac{\c_r}{\c_{r-1}}} \rp
\nonumber \\
 & \quad    + \gamma_{sq}
+ \alpha
\Bigg(\Bigg. -\sum_{k=2}^{r}\frac{1}{2\c_k} \log \lp \frac{\Theta_{k}}{\Theta_{k-1}}\rp +\frac{\p_r}{2\Theta_{r}}  \Bigg.\Bigg),
    \end{align}
where
\begin{eqnarray}\label{eq:prac30}
 \Theta_{1} & = & 2\gamma_{sq} \nonumber \\
 \Theta_{k} & = & \Theta_{k-1}-\c_k(\p_{k-1}-\p_k), k\in\{2,3,\dots,r\}.
 \end{eqnarray}
The above characterization of $\bar{\psi}(\cdot)$ gives explicit dependence on all key parameters $\p$, $\q$, $\m$, and $\gamma_{sq}$ and consequently  enables direct computations of all the needed derivatives for any $r\in\mN$. Several additional helpful relations can be deduced from \cite{Stojnicsflgscompyx23}. We show them nest.

\subsubsection{Helpful relations}
\label{sec:helprel}

We first need a few definitions and to that end start with
\begin{eqnarray}
\label{eq:thm3eq2}
Z_{(s)} & \triangleq & \sum_{i_1=1}^{l}\lp\sum_{i_2=1}^{l}e^{\beta D_0^{(i_1,i_2)}} \rp^{s}
=\sum_{i_1=1}^{l}\lp\sum_{i_2=1}^{l}  A^{(i_1,i_2)} \rp^{s}
=\sum_{i_1=1}^{l}\lp  C^{(i_1)} \rp^{s}
\nonumber \\
 D_0^{(i_1,i_2)} & \triangleq &   (\y^{(i_2)})^T\lp\sum_{k=1}^{r+1}b_k\u^{(2,k)}\rp +\lp\sum_{k=1}^{r+1}c_k\h^{(k)}\rp^T\x^{(i_1)}
 \end{eqnarray}
With $\m_0=1$, we set
\begin{eqnarray}\label{eq:rthlev2genanal7a}
\zeta_r\triangleq \mE_{{\mathcal U}_{r}} \lp \dots \lp \mE_{{\mathcal U}_2}\lp\lp\mE_{{\mathcal U}_1} \lp Z^{\frac{\m_1}{\m_0}}\rp\rp^{\frac{\m_2}{\m_1}}\rp\rp^{\frac{\m_3}{\m_2}} \dots \rp^{\frac{\m_{r}}{\m_{r-1}}}, r\geq 1.
\end{eqnarray}
One can then write
\begin{eqnarray}\label{eq:rthlev2genanal7b}
\zeta_k = \mE_{{\mathcal U}_{k}} \lp  \zeta_{k-1} \rp^{\frac{\m_{k}}{\m_{k-1}}}, k\geq 2,\quad \mbox{and} \quad
\zeta_0=Z, \qquad \zeta_1=\mE_{{\mathcal U}_1} \lp Z^{\frac{\m_1}{\m_0}}\rp.
\end{eqnarray}
 Also, consider the operators
\begin{eqnarray}\label{eq:thm3eq3}
 \Phi_{{\mathcal U}_k} & \triangleq &  \mE_{{\mathcal U}_{k}} \frac{\zeta_{k-1}^{\frac{\m_k}{\m_{k-1}}}}{\zeta_k},
 \end{eqnarray}
and set
\begin{eqnarray}\label{eq:thm3eq4}
  \gamma_0(i_1,i_2) & = &
\frac{(C^{(i_1)})^{s}}{Z_{(s)}}  \frac{A^{(i_1,i_2)}}{C^{(i_1)}} \nonumber \\
\gamma_{01}^{(r)}  & = & \prod_{k=r}^{1}\Phi_{{\mathcal U}_k} (\gamma_0(i_1,i_2)) \nonumber \\
\gamma_{02}^{(r)}  & = & \prod_{k=r}^{1}\Phi_{{\mathcal U}_k} (\gamma_0(i_1,i_2)\times \gamma_0(i_1,p_2)) \nonumber \\
\gamma_{k_1+1}^{(r)}  & = & \prod_{k=r}^{k_1+1}\Phi_{{\mathcal U}_k} \lp \prod_{k=k_1}^{1}\Phi_{{\mathcal U}_k}\gamma_0(i_1,i_2)\times \prod_{k=k_1}^{1} \Phi_{{\mathcal U}_k}\gamma_0(p_1,p_2) \rp.
 \end{eqnarray}
Then,\cite{Stojnicsflgscompyx23} gives
\begin{eqnarray}\label{eq:thm3eq5}
\hat{\p}_{k-1} & = & E_{{\mathcal U}_{r+1}}\langle (\x^{(i_1)})^T\x^{(p_1)}\rangle_{\gamma_{k}^{(r)}} \nonumber \\
\hat{\q}_{k-1} & = & E_{{\mathcal U}_{r+1}}\langle (\y^{(i_2)})^T\y^{(p_2)}\rangle_{\gamma_{k}^{(r)}},
 \end{eqnarray}
 where $\langle  \cdot \rangle_{\gamma_{k}^{(r)}}$ stands for the average with respect to $\gamma_{k}^{(r)}$.
For example, to make everything concrete, one can take $r=3$ and for $\beta=\bar{\beta}\sqrt{n}$, $\bar{\beta}\rightarrow\infty$ and $b_k=\hat{b}_k$ and $c_k=\hat{c}_k$,  observe that
\begin{eqnarray}\label{eq:thm3eq5a1}
Z_{(s)} & \rightarrow & e^{\max_{\x} s \max_{\y}  \beta D_0^{(i_1,i_2)} }
\rightarrow e^{  \bar{\beta} \lp \sqrt{n}\|\sum_{k=1}^{r+1}\hat{b}_k\u^{(2,k)}\|_2 +\lp\sum_{k=1}^{r+1}\hat{c}_k\h^{(k)}\rp^T\bar{\x} \rp } \nonumber \\
&\rightarrow & e^{  \bar{\beta} \lp \frac{\|\sum_{k=1}^{r+1}\hat{b}_k\u^{(2,k)}\|_2^2}{4\hat{\gamma}_{sq}}+n\hat{\gamma}_{sq} +\lp\sum_{k=1}^{r+1}\hat{c}_k\h^{(k)}\rp^T\bar{\x} \rp } \nonumber \\
&\rightarrow & e^{  \bar{\beta} \lp \frac{\bar{\y}^T \lp \sum_{k=1}^{r+1}\hat{b}_k\u^{(2,k)}\rp }{2}+n\hat{\gamma}_{sq} +\lp\sum_{k=1}^{r+1}\hat{c}_k\h^{(k)}\rp^T\bar{\x} \rp } \nonumber \\
 \end{eqnarray}
where (since $\hat{\p}_1\rightarrow 1$ and $\hat{\q}_1\rightarrow 1$) the optimal $\bar{\x}_{i}$ and $\bar{\y}_{i}$ are given as
\begin{eqnarray}\label{eq:thm3eq6}
\bar{\x}_{i} & = & \mbox{sign}\lp\sqrt{1-\hat{\q}_2}\h_i^{(2)}+\sqrt{\hat{\q}_2-\hat{\q}_3}\h_i^{(3)}+\sqrt{\hat{\q}_3}\h_i^{(4)}\rp \nonumber \\
\bar{\y}_{i} & = &\frac{\lp\sqrt{1-\hat{\p}_2}\u_i^{(2,2)}+\sqrt{\hat{\p}_2-\hat{\p}_3}\u_i^{(2,3)}+\sqrt{\hat{\p}_3}\u_i^{(2,4)}\rp}{2\hat{\gamma}_{sq}}.
  \end{eqnarray}
Relying on (\ref{eq:prac5}), one further has
\begin{eqnarray}\label{eq:thm3eq6a0}
D^{(bin)}_1(\hat{c}_k)=\left |\lp\sum_{k=2}^{r+1}\hat{c}_k\h_i^{(k)}\rp \right |
=\left |\lp\sum_{k=2}^{r+1}\sqrt{\hat{\q}_{k-1}-\hat{\q}_k}\h_i^{(k)}\rp \right |
=\left | \sqrt{1-\q_2}\h_1^{(2)}+\sqrt{\q_2-\q_3}\h_1^{(3)}+\sqrt{\q_3}\h_1^{(4)} \right |,
\end{eqnarray}
 and, analogously, relying on (\ref{eq:prac10})
\begin{eqnarray}\label{eq:thm3eq6a01}
   D_1^{(sph)}(\hat{b}_k)= \frac{\lp \sum_{k=2}^{r+1}\hat{b}_k\u_i^{(2,k)}  \rp^2}{4\gamma_{sq}}
   =\frac{\lp \sqrt{1-\hat{\p}_2}\u_1^{(2,2)}+\sqrt{\hat{\p}_2-\hat{\p}_3}\u_1^{(2,3)}+\sqrt{\hat{\p}_3}\u_1^{(2,4)} \rp^2}{4\gamma_{sq}}.
 \end{eqnarray}
 We recall that triplets $\u^{(2,k)}$ and $\h^{(k)}$ are independent of each other and that each of these vectors is comprised of i.i.d. components, which allows a decoupling over $i$. Moreover, due to symmetry of indices $i$ we also have from (\ref{eq:thm3eq5})
\begin{eqnarray}\label{eq:thm3eq602}
\hat{\p}_{k-1} & = & E_{{\mathcal U}_{r+1}}\langle (\x^{(i_1)})^T\x^{(p_1)}\rangle_{\gamma_{k}^{(r)}}
= n E_{{\mathcal U}_{r+1}}\left \langle \frac{\bar{\x}_1^{(i_1)}}{\sqrt{n}} \frac{\bar{\x}_1^{(p_1)}}{\sqrt{n}}\right \rangle_{\gamma_{k}^{(r)}}
=  E_{{\mathcal U}_{r+1}}\left \langle \bar{\x}_1^{(i_1)}\bar{\x}_1^{(p_1)}\right \rangle_{\gamma_{k}^{(r)}}
\nonumber \\
\hat{\q}_{k-1} & = & E_{{\mathcal U}_{r+1}}\langle (\y^{(i_2)})^T\y^{(p_2)}\rangle_{\gamma_{k}^{(r)}}
= n E_{{\mathcal U}_{r+1}}\left \langle \frac{\bar{\y}_1^{(i_2)}}{\sqrt{n}} \frac{\bar{\y}_1^{(p_2)}}{\sqrt{n}}\right \rangle_{\gamma_{k}^{(r)}}
=  E_{{\mathcal U}_{r+1}}\left \langle \bar{\y}_1^{(i_2)}\bar{\y}_1^{(p_2)}\right \rangle_{\gamma_{k}^{(r)}}
 \end{eqnarray}
 where $\bar{\x}^{(i_1)}$ is optimal over $i_1$ indexing system and $\bar{\x}^{(p_1)}$ is optimal over $p_1$ indexing system and, analogously, $\bar{\y}^{(i_2)}$ is optimal over $i_2$ indexing system and $\bar{\y}^{(p_2)}$ is optimal over $p_2$ indexing system. From (\ref{eq:thm3eq602}),
we then have
\begin{equation}\label{eq:thm3eq7}
\hat{\p}_2=\mE_{{\mathcal U}_{4}}
\lp\frac{
\mE_{{\mathcal U}_{3}}
\lp
\lp \frac{\mE_{{\mathcal U}_{2}}\bar{\x}_1 e^{\hat{\c}_2D^{(bin)}_1(\hat{c}_k)}}
{\mE_{{\mathcal U}_{2}}e^{\hat{\c}_2 D^{(bin)}_1(\hat{c}_k)}}\rp^2
\lp\mE_{{\mathcal U}_{2}}e^{\hat{\c}_2D^{(bin)}_1(\hat{c}_k)}\rp^{\frac{\c_3}{\c_2}}\rp}
{\mE_{{\mathcal U}_{3}}\lp \lp \mE_{{\mathcal U}_{2}} e^{\hat{\c}_2D^{(bin)}_1(\hat{c}_k)}\rp^{\frac{\c_3}{\c_2}}\rp} \rp,
  \end{equation}
and
\begin{equation}\label{eq:thm3eq8}
\hat{\p}_3= \mE_{{\mathcal U}_{4}}
\lp\frac{
\mE_{{\mathcal U}_{3}}
\lp
\lp \frac{\mE_{{\mathcal U}_{2}}\bar{\x}_1 e^{\hat{\c}_2D^{(bin)}_1(\hat{c}_k)}}
{\mE_{{\mathcal U}_{2}}e^{\hat{\c}_2 D^{(bin)}_1(\hat{c}_k)}}\rp
\lp\mE_{{\mathcal U}_{2}}e^{\hat{\c}_2D^{(bin)}_1(\hat{c}_k)}\rp^{\frac{\c_3}{\c_2}}\rp}
{\mE_{{\mathcal U}_{3}}\lp \lp \mE_{{\mathcal U}_{2}} e^{\hat{\c}_2D^{(bin)}_1(\hat{c}_k)}\rp^{\frac{\c_3}{\c_2}}\rp} \rp^2.
  \end{equation}
Analogously to (\ref{eq:thm3eq7}) and (\ref{eq:thm3eq8}), we also have
\begin{equation}\label{eq:thm3eq7a1}
\hat{\q}_2=\alpha \mE_{{\mathcal U}_{4}}
\lp\frac{
\mE_{{\mathcal U}_{3}}
\lp
\lp \frac{\mE_{{\mathcal U}_{2}}\bar{\y}_1 e^{\hat{\c}_2 \lp D^{(sph)}_1(\hat{b}_k) +\hat{\gamma}_{sq}\rp}}
{\mE_{{\mathcal U}_{2}}e^{\hat{\c}_2 \lp D^{(sph)}_1(\hat{b}_k)+\hat{\gamma}_{sq}\rp}}\rp^2
\lp\mE_{{\mathcal U}_{2}}e^{\hat{\c}_2 \lp D^{(sph)}_1(\hat{b}_k)+\hat{\gamma}_{sq}\rp}\rp^{\frac{\c_3}{\c_2}}\rp}
{\mE_{{\mathcal U}_{3}}\lp \lp \mE_{{\mathcal U}_{2}} e^{\hat{\c}_2 \lp D^{(sph)}_1(\hat{b}_k)+\hat{\gamma}_{sq}\rp}\rp^{\frac{\c_3}{\c_2}}\rp} \rp,
  \end{equation}
and
\begin{equation}\label{eq:thm3eq8a1}
\hat{\q}_3=\alpha \mE_{{\mathcal U}_{4}}
\lp\frac{
\mE_{{\mathcal U}_{3}}
\lp
\lp \frac{\mE_{{\mathcal U}_{2}}\bar{\y}_1 e^{\hat{\c}_2 \lp D^{(sph)}_1(\hat{b}_k) +\hat{\gamma}_{sq}\rp}}
{\mE_{{\mathcal U}_{2}}e^{\hat{\c}_2 \lp D^{(sph)}_1(\hat{b}_k)+\hat{\gamma}_{sq}\rp}}\rp
\lp\mE_{{\mathcal U}_{2}}e^{\hat{\c}_2 \lp D^{(sph)}_1(\hat{b}_k)+\hat{\gamma}_{sq}\rp}\rp^{\frac{\c_3}{\c_2}}\rp}
{\mE_{{\mathcal U}_{3}}\lp \lp \mE_{{\mathcal U}_{2}} e^{\hat{\c}_2 \lp D^{(sph)}_1(\hat{b}_k)+\hat{\gamma}_{sq}\rp}\rp^{\frac{\c_3}{\c_2}}\rp} \rp^2,
  \end{equation}
It is now simple to write the above equations for general $r$. First, with $\c_1\rightarrow 1$,  we set
\begin{eqnarray}\label{eq:rthlev2genanal7ab1}
\zeta_r^{(bin)} &\triangleq & \mE_{{\mathcal U}_{r}} \lp \dots \lp \mE_{{\mathcal U}_3}\lp\lp\mE_{{\mathcal U}_2} \lp \lp e^{D^{(bin)}_1(\hat{c}_k)}\rp^{\frac{\c_2}{\c_1}}\rp\rp^{\frac{\c_3}{\c_2}}\rp\rp^{\frac{\c_4}{\c_3}} \dots \rp^{\frac{\c_{r}}{\c_{r-1}}}, r\geq 2\nonumber \\
\zeta_r^{(sph)} & \triangleq & \mE_{{\mathcal U}_{r}} \lp \dots \lp \mE_{{\mathcal U}_3}\lp\lp\mE_{{\mathcal U}_2} \lp \lp e^{\lp D^{(sph)}_1(\hat{b}_k) +\hat{\gamma}_{sq}\rp}\rp^{\frac{\c_2}{\c_1}}\rp\rp^{\frac{\c_3}{\c_2}}\rp\rp^{\frac{\c_4}{\c_3}} \dots \rp^{\frac{\c_{r}}{\c_{r-1}}}, r\geq 2\nonumber \\
 \end{eqnarray}
One can then write
\begin{align}\label{eq:rthlev2genanal7bb1}
\zeta_k^{(bin)} & =  \mE_{{\mathcal U}_{k}} \lp  \zeta_{k-1}^{(bin)} \rp^{\frac{\c_{k}}{\c_{k-1}}}, k\geq 3,\quad
\zeta_1^{(bin)}=e^{D^{(bin)}_1(\hat{c}_k)}, \hspace{.05in} \zeta_2^{(bin)}=\mE_{{\mathcal U}_2} \lp \lp e^{D^{(bin)}_1(\hat{c}_k)} \rp^{\frac{\c_2}{\c_1}}\rp \nonumber \\
\zeta_k^{(sph)} & =  \mE_{{\mathcal U}_{k}} \lp  \zeta_{k-1}^{(sph)} \rp^{\frac{\c_{k}}{\c_{k-1}}}, k\geq 3,\quad
\zeta_1^{(sph)}=e^{\lp D^{(sph)}_1(\hat{b}_k)+\hat{\gamma}_{sq}\rp},  \hspace{.05in}  \zeta_2^{(sph)}=\mE_{{\mathcal U}_2} \lp \lp e^{\lp D^{(sph)}_1(\hat{b}_k)+\hat{\gamma}_{sq}\rp } \rp^{\frac{\c_2}{\c_1}}\rp. \nonumber \\
\end{align}
 Also, for $k\geq 2$, let
\begin{eqnarray}\label{eq:thm3eq3b1}
 \Phi_{{\mathcal U}_k}^{(bin)}(\cdot) & \triangleq &  \mE_{{\mathcal U}_{k}} \frac{\lp \zeta_{k-1}^{(bin)} \rp^{\frac{\c_k}{\c_{k-1}}}}{\zeta_k^{(bin)}} \lp \cdot\rp \nonumber \\
 \Phi_{{\mathcal U}_k}^{(sph)} (\cdot)& \triangleq &  \mE_{{\mathcal U}_{k}} \frac{\lp \zeta_{k-1}^{(sph)} \rp^{\frac{\c_k}{\c_{k-1}}}}{\zeta_k^{(sph)}} \lp \cdot \rp.
 \end{eqnarray}
 Then
\begin{eqnarray}\label{eq:ovlpeq1}
 \hat{\p}_2& = & \prod_{k=r}^{3}\Phi_{{\mathcal U}_k}^{(bin)} \lp\lp\Phi_{{\mathcal U}_2}^{(bin)}(\bar{\x}_1) \rp^2 \rp \nonumber \\
 \hat{\p}_3 & = &\prod_{k=r}^{4}\Phi_{{\mathcal U}_k}^{(bin)} \lp \lp \Phi_{{\mathcal U}_3}^{(bin)} \lp\Phi_{{\mathcal U}_2}^{(bin)}(\bar{\x}_1) \rp\rp^2 \rp \nonumber \\
\vdots & = & \vdots \nonumber \\
 \hat{\p}_{k_1} & = &\prod_{k=r}^{k_1+1}\Phi_{{\mathcal U}_k}^{(bin)} \lp \lp \prod_{k=k_1}^{2}\Phi_{{\mathcal U}_{k}}^{(bin)}  (\bar{\x}_1) \rp^2 \rp \nonumber \\
\vdots & = & \vdots \nonumber \\
 \hat{\p}_{r} & = & \lp \prod_{k=r}^{2}\Phi_{{\mathcal U}_{k}}^{(bin)}  (\bar{\x}_1) \rp^2, \nonumber \\
 \end{eqnarray}
and
\begin{eqnarray}\label{eq:ovlpeq2}
 \hat{\q}_2& = & \alpha\prod_{k=r}^{3}\Phi_{{\mathcal U}_k}^{(bin)} \lp\lp\Phi_{{\mathcal U}_2}^{(bin)}(\bar{\y}_1) \rp^2 \rp \nonumber \\
 \hat{\q}_3 & = & \alpha\prod_{k=r}^{4}\Phi_{{\mathcal U}_k}^{(bin)} \lp \lp \Phi_{{\mathcal U}_3}^{(bin)} \lp\Phi_{{\mathcal U}_2}^{(bin)}(\bar{\y}_1) \rp\rp^2 \rp \nonumber \\
\vdots & = & \vdots \nonumber \\
 \hat{\q}_{k_1} & = & \alpha \prod_{k=r}^{k_1+1}\Phi_{{\mathcal U}_k}^{(bin)} \lp \lp \prod_{k=k_1}^{2}\Phi_{{\mathcal U}_{k}}^{(bin)}  (\bar{\y}_1) \rp^2 \rp \nonumber \\
\vdots & = & \vdots \nonumber \\
 \hat{\q}_{r} & = & \alpha \lp \prod_{k=r}^{2}\Phi_{{\mathcal U}_{k}}^{(bin)}  (\bar{\y}_1) \rp^2,
 \end{eqnarray}
 and the products are running in the \emph{index decreasing order}.

 In (\ref{eq:prac29}), the integrals containing $\hat{\q}$ are not easily solvable in closed form. Having that in mind, one observes that the above relations, (\ref{eq:thm3eq7}), (\ref{eq:thm3eq8}), and ultimately (\ref{eq:ovlpeq1}), offer an advantageous avenue for numerical evaluations. On the other hand, as is clear from (\ref{eq:prac29}), the integrals containing $\hat{\p}$ are already solved in closed form. It is therefore numerically more advantageous to directly differentiate (\ref{eq:prac29}) over $\p$ than to rely on (\ref{eq:thm3eq7a1}), (\ref{eq:thm3eq8a1}), and ultimately (\ref{eq:ovlpeq2}). Nonetheless, we included (\ref{eq:thm3eq7a1}), (\ref{eq:thm3eq8a1}), and (\ref{eq:ovlpeq2}) for the completeness. Finally, one of the key properties of the above mechanism that is of incredible practical relevance should not go unnoticed. Namely, in a rather remarkable fashion, the entire lifting mechanism exhibits a rapid convergence. In particular, looking at the values given in Table \ref{tab:2rsbunifiedsqrtpos}, one observes that the convergence of the ground state energy on the the fourth decimal is astonishingly achieved already on the third level of full lifting.

\subsubsection{Modulo-$\m$ sfl RDT}
\label{sec:posmodm}

It is interesting to note that the above can be repeated if one relies on the so-called modulo-$m$ sfl RDT frame of \cite{Stojnicsflgscompyx23}. Instead of Theorem \ref{thme:thmprac1}, we then basically have the following theorem.

\begin{theorem}
  \label{thme:thmprac2}
Assume the setup of Theorem \ref{thme:thmprac1} and instead of the complete, assume the modulo-$\m$ sfl RDT setup of \cite{Stojnicsflgscompyx23}. Let
 \begin{align}\label{eq:thmprac2eq1}
   - \bar{\psi}_{rd}(\p,\q,\c,\gamma_{sq},1,1,1)   & =  - \frac{1}{2}
\sum_{k=2}^{r+1}(\p_{k-1}\q_{k-1}-\p_k\q_k)\c_k   \nonumber \\
&\quad  + \frac{1}{\c_r}\mE_{{\mathcal U}_{r+1}}\log\lp \mE_{{\mathcal U}_r} \lp \dots \lp \mE_{{\mathcal U}_3} \lp  \mE_{{\mathcal U}_2} e^{\c_2|\sum_{k=2}^{r+1}\sqrt{\q_{k-1}-\q_k}\h_1^{(k)} |} \rp^{\frac{\c_3}{\c_2}} \rp^{\frac{\c_4}{\c_{3}}} \dots \rp^{\frac{\c_r}{\c_{r-1}}} \rp
\nonumber \\
 & \quad    + \gamma_{sq}
+ \alpha
\Bigg(\Bigg. -\sum_{k=2}^{r}\frac{1}{2\c_k} \log \lp \frac{\Theta_{k}}{\Theta_{k-1}}\rp +\frac{\p_r}{2\Theta_{r}}  \Bigg.\Bigg) \nonumber \\
  \Theta_{1} & =  2\gamma_{sq} \nonumber \\
 \Theta_{k} & =  \Theta_{k-1}-\c_k(\p_{k-1}-\p_k), k\in\{2,3,\dots,r\}.
 \end{align}
Let $\hat{\gamma}_{sq}$ and the ``non-fixed" parts of $\hat{\p}$ and $\hat{\q}$ be the solutions of the following system of equations
\begin{eqnarray}\label{eq:thmprac2eq2}
   \frac{d \bar{\psi}_{rd}(\p,\q,\c,\gamma_{sq},1,1,1)}{d\p} =  0 \nonumber \\
   \frac{d \bar{\psi}_{rd}(\p,\q,\c,\gamma_{sq},1,1,1)}{d\q} =  0 \nonumber \\
    \frac{d \bar{\psi}_{rd}(\p,\q,\c,\gamma_{sq},1,1,1)}{d\gamma_{sq}} =  0.
 \end{eqnarray}
Then
 \begin{eqnarray}
f_{sq,\m}^{+}(\infty)
& \geq  & \min_{\c} \lp - \bar{\psi}_{rd}(\hat{\p},\hat{\q},\c,\hat{\gamma}_{sq},1,1,1)\rp.
  \label{eq:thmprac2eq3}
\end{eqnarray}
\end{theorem}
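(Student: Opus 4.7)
The plan is to run exactly the same derivation that led to Theorem \ref{thme:thmprac1}, but swap the \emph{complete} sfl RDT machinery from \cite{Stojnicflrdt23} for the weaker \emph{modulo-}$\m$ variant of \cite{Stojnicsflgscompyx23}. The key structural difference is that the modulo-$\m$ frame does not deliver the strong duality equality in (\ref{eq:thmsflrdt2eq3}), but only a one-sided inequality: the random primal is lower-bounded by the modulo-$\m$ random dual for \emph{any} admissible choice of $\c$ satisfying $\c_0=1,\c_{r+1}=0$. This is precisely the statement from which the $\geq$ in (\ref{eq:thmprac2eq3}) will emerge, and it also explains why $\c$ is not forced to be a stationary point: lacking the full equality, the tightest bound is obtained by minimizing the dual over $\c$, while the ``non-fixed'' $\p,\q$ are still taken at the saddle prescribed by the first two equations in (\ref{eq:thmprac2eq2}) (which come from removing the dependence on the extra free parameters inherited from the bli characterization).

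Concretely, I would proceed in four steps. First, I would restate Corollary \ref{cor:cor1} in its modulo-$\m$ form, so that for any $\p,\q,\c$ obeying (\ref{eq:hmsfl2}) one has
\begin{equation*}
\lim_{n\to\infty}\frac{\mE_G\psi_{rp}}{\sqrt{n}}\;\le\;\lim_{n\to\infty}\psi_{rd}(\p,\q,\c,1,1,1),
\end{equation*}
which follows by line-by-line repetition of the arguments of Section~3 of \cite{Stojnicflrdt23} after replacing $s=-1$ by $s=1$ and the complete by the modulo-$\m$ decoupling. Second, I would reduce $\psi_{rd}$ to the concrete object $\bar{\psi}_{rd}$ of (\ref{eq:thmprac2eq1}) exactly as in (\ref{eq:prac1})--(\ref{eq:prac10}): the binary part decouples componentwise into the absolute-value exponent, and the spherical part is handled by the square-root trick (\ref{eq:prac8})--(\ref{eq:prac10}) with the scaling $\gamma=\gamma_{sq}\sqrt{n}$, producing the closed-form $\Theta_k$ recursion in the $\alpha$-term. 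Third, I would flip signs to pass from $\psi_{rp}$ to $f_{sq,\m}^{+}(\infty)$ as in (\ref{eq:prac11}), which turns the $\le$ into a $\ge$ and yields
\begin{equation*}
f_{sq,\m}^{+}(\infty)\;\ge\;-\bar{\psi}_{rd}(\p,\q,\c,\gamma_{sq},1,1,1)
\end{equation*}
for every admissible $(\p,\q,\c,\gamma_{sq})$. Fourth, I would optimize: the $\p,\q,\gamma_{sq}$ equations in (\ref{eq:thmprac2eq2}) pin down the optimal choices of those parameters (they are genuine saddle conditions obtained from the bli representation, hence mandatory even in the modulo-$\m$ frame), after which the remaining freedom in $\c$ is used to tighten the bound, giving the $\min_\c$ in (\ref{eq:thmprac2eq3}).

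The main obstacle, and the only place where one must be careful, is the passage in step one: one has to verify that the modulo-$\m$ decoupling from \cite{Stojnicsflgscompyx23} really yields an inequality in the direction consistent with lower-bounding $f_{sq,\m}^{+}(\infty)$ (equivalently, upper-bounding $\mE_G\psi_{rp}$) once the $s=1$ sign is installed. The $s=-1$ case is the one treated in \cite{Stojnicflrdt23}; for $s=1$ the same comparison inequalities on Gaussian processes apply, but the roles of the two index families in $\y^TG\x$ are swapped, so one needs to check that the modulo-$\m$ interpolation preserves the correct direction throughout the $r$ nested expectations. Once that is verified, everything else is bookkeeping: the expressions for $\bar{\psi}_{rd}$, the closed-form spherical integrals via $\Theta_k$, and the stationarity equations for $\p,\q,\gamma_{sq}$ carry over verbatim from the proof of Theorem \ref{thme:thmprac1}, and the final statement follows.
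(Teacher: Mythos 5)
Your proposal follows essentially the same route as the paper, whose own proof of this theorem is nothing more than an appeal to the preceding reduction (\ref{eq:prac1})--(\ref{eq:prac18}) together with Theorem \ref{thm:thmsflrdt1}, Corollary \ref{cor:cor1}, and the modulo-$\m$ machinery of the cited references --- exactly the ingredients you assemble. One small remark: since the modulo-$\m$ frame yields $f_{sq,\m}^{+}(\infty)\geq -\bar{\psi}_{rd}(\hat{\p},\hat{\q},\c,\hat{\gamma}_{sq},1,1,1)$ for \emph{every} admissible $\c$, the stated conclusion with $\min_{\c}$ follows a fortiori, but your phrase ``the tightest bound is obtained by minimizing the dual over $\c$'' has the optimization direction backwards --- minimizing over a free parameter \emph{weakens} a lower bound; the paper states the $\min_{\c}$ form because it is what is guaranteed, and only the subsequent numerical exploration reveals that the stationarity over $\c$ is in fact of the minimization type, making the bound tight.
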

\begin{proof}
Follows from the previous discussion, Theorem \ref{thm:thmsflrdt1}, Corollary \ref{cor:cor1}, and the sfl RDT machinery presented in \cite{Stojnicnflgscompyx23,Stojnicsflgscompyx23,Stojnicflrdt23}.
\end{proof}

We conducted the numerical evaluations using the modulo-$\m$ results of the above theorem for all $r$ from Table \ref{tab:2rsbunifiedsqrtpos}, i.e., for all $r\leq 3$. Exploring a very wide range of $\c$ parameters, we have found that the inequality in (\ref{eq:thmprac2eq3}) is in fact tight, i.e., that $f_{sq}^{+,r}(\infty)=f_{sq,\m}^{+,r}(\infty)$. This indicates that the \emph{stationarity} over $\c$ is actually of the \emph{minimization} type.

\subsection{Negative square root Hopfield model ($s=-1$)}
\label{sec:neg}

Following what was done above when considering the positive vaiant of the model, we start by connecting the ground state energy of the negative square root Hopfield model, $f^-_{sq}$ given in (\ref{eq:limlogpartfunsqrt}), and the random primal, $\psi_{rp}(\cdot)$, given in Corollary \ref{cor:cor1}. To that end, we have
 \begin{eqnarray}
-f_{sq}^{-}(\infty)
& = &  \lim_{n\rightarrow\infty}\frac{\mE_G \min_{\x\in\cX} \sqrt{\x^TG^TG\x}}{\sqrt{n}} =
-\lim_{n\rightarrow\infty}\frac{\mE_G \max_{\x\in\cX} - \sqrt{\x^TG^TG\x}}{\sqrt{n}}\nonumber \\
& = &
- \lim_{n\rightarrow\infty}\frac{\mE_G \max_{\x\in\cX}  -  \max_{\y\in\cY} \y^TG\x}{\sqrt{n}}
 =
    \lim_{n\rightarrow\infty} \frac{\mE_G  \psi_{rp}}{\sqrt{n}}
   =
 \lim_{n\rightarrow\infty} \psi_{rd}(\hat{\p},\hat{\q},\hat{\c},1,1,-1),
  \label{eq:negprac11}
\end{eqnarray}
where the non-fixed parts of $\hat{\p}$, $\hat{\q}$, and  $\hat{\c}$ are the solutions of the following system
\begin{eqnarray}\label{eq:negprac12}
   \frac{d \psi_{rd}(\p,\q,\c,1,1,-1)}{d\p} =  0,\quad
   \frac{d \psi_{rd}(\p,\q,\c,1,1,-1)}{d\q} =  0,\quad
   \frac{d \psi_{rd}(\p,\q,\c,1,1,-1)}{d\c} =  0.
 \end{eqnarray}
Moreover, relying on (\ref{eq:prac1})-(\ref{eq:prac10}), we have
 \begin{eqnarray}
 \lim_{n\rightarrow\infty} \psi_{rd}(\hat{\p},\hat{\q},\hat{\c},1,1,-1) =  \bar{\psi}_{rd}(\hat{\p},\hat{\q},\hat{\c},\hat{\gamma}_{sq},1,1,-1),
  \label{eq:negprac12a}
\end{eqnarray}
where
\begin{eqnarray}\label{eq:negprac13}
    \bar{\psi}_{rd}(\p,\q,\c,\gamma_{sq},1,1,-1)   & = &  \frac{1}{2}    \sum_{k=2}^{r+1}\Bigg(\Bigg.
   \p_{k-1}\q_{k-1}
   -\p_{k}\q_{k}
  \Bigg.\Bigg)
\c_k
\nonumber \\
& &  - \varphi(D_1^{(bin)}(c_k(\p,\q)),\c) +\gamma_{sq}- \alpha\varphi(-D_1^{(sph)}(b_k(\p,\q)),\c).
  \end{eqnarray}
 Connecting  (\ref{eq:negprac11}), (\ref{eq:negprac12a}), and (\ref{eq:negprac13}), we then find
 \begin{eqnarray}
-f_{sq}^{-}(\infty)
& = &  -\lim_{n\rightarrow\infty}\frac{\mE_G \max_{\x\in\cX} - \sqrt{\x^TG^TG\x}}{\sqrt{n}} =
 -\lim_{n\rightarrow\infty}\frac{\mE_G \max_{\x\in\cX}  -  \max_{\y\in\cY} \y^TG\x}{\sqrt{n}} \nonumber \\
    &  = &
 \lim_{n\rightarrow\infty} \psi_{rd}(\hat{\p},\hat{\q},\hat{\c},1,1,-1)
 =   \bar{\psi}_{rd}(\hat{\p},\hat{\q},\hat{\c},\hat{\gamma}_{sq},1,1,-1) \nonumber \\
 & = &   \frac{1}{2}    \sum_{k=2}^{r+1}\Bigg(\Bigg.
   \hat{\p}_{k-1}\hat{\q}_{k-1}
   -\hat{\p}_{k}\hat{\q}_{k}
  \Bigg.\Bigg)
\hat{\c}_k
  - \varphi(D_1^{(bin)}(c_k(\hat{\p},\hat{\q})),\c) + \hat{\gamma}_{sq} - \alpha\varphi(-D_1^{(sph)}(b_k(\hat{\p},\hat{\q})),\c). \nonumber \\
  \label{eq:negprac18}
\end{eqnarray}
The following theorem summarizes the above results.

\begin{theorem}
  \label{thme:negthmprac1}
  Assume the complete sfl RDT setup of \cite{Stojnicsflgscompyx23}. Consider large $n$ linear regime with $\alpha=\lim_{n\rightarrow\infty} \frac{m}{n}$ and let $\varphi(\cdot)$ and $\bar{\psi}(\cdot)$ be as given in (\ref{eq:prac2}) and (\ref{eq:prac13}), respectively. Also, let the ``fixed'' parts of $\hat{\p}$, $\hat{\q}$, and $\hat{\c}$ satisfy $\hat{\p}_1\rightarrow 1$, $\hat{\q}_1\rightarrow 1$, $\hat{\c}_1\rightarrow 1$, $\hat{\p}_{r+1}=\hat{\q}_{r+1}=\hat{\c}_{r+1}=0$, and let the ``non-fixed'' parts of $\hat{\p}_k$, $\hat{\q}_k$, and $\hat{\c}_k$ ($k\in\{2,3,\dots,r\}$) be the solutions of the following system of equations
  \begin{eqnarray}\label{eq:negthmprac1eq1}
   \frac{d \bar{\psi}_{rd}(\p,\q,\c,\gamma_{sq},1,1,-1)}{d\p} =  0 \nonumber \\
   \frac{d \bar{\psi}_{rd}(\p,\q,\c,\gamma_{sq},1,1,-1)}{d\q} =  0 \nonumber \\
   \frac{d \bar{\psi}_{rd}(\p,\q,\c,\gamma_{sq},1,1,-1)}{d\c} =  0 \nonumber \\
   \frac{d \bar{\psi}_{rd}(\p,\q,\c,\gamma_{sq},1,1,-1)}{d\gamma_{sq}} =  0,
 \end{eqnarray}
Moreover, let $c_k(\hat{\p},\hat{\q})$ and $b_k(\hat{\p},\hat{\q})$ be as in (\ref{eq:prac17}). Then
 \begin{eqnarray}
-f_{sq}^{-}(\infty)
& = &     \frac{1}{2}    \sum_{k=2}^{r+1}\Bigg(\Bigg.
   \hat{\p}_{k-1}\hat{\q}_{k-1}
   -\hat{\p}_{k}\hat{\q}_{k}
  \Bigg.\Bigg)
\hat{\c}_k
  - \varphi(D_1^{(bin)}(c_k(\hat{\p},\hat{\q})),\hat{\c}) + \hat{\gamma}_{sq} - \alpha\varphi(-D_1^{(sph)}(b_k(\hat{\p},\hat{\q})),\hat{\c}). \nonumber \\
  \label{eq:negthmprac1eq2}
\end{eqnarray}
\end{theorem}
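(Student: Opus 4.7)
The plan is to mirror the derivation of Theorem \ref{thme:thmprac1}, keeping everything identical except for the places where the sign $s$ affects the calculation. Concretely, I would start by invoking Corollary \ref{cor:cor1} with $s=-1$ and $f(\x)=0$, which (by strong sfl random duality) identifies $\lim_{n\to\infty}\mE_G\psi_{rp}/\sqrt{n}$ with $\lim_{n\to\infty}\psi_{rd}(\hat{\p},\hat{\q},\hat{\c},1,1,-1)$. The elementary chain of identities in (\ref{eq:negprac11}) ties the left-hand side directly to $-f_{sq}^{-}(\infty)$, so after this step the problem is reduced to evaluating the fl random dual at $s=-1$.

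Next, I would unpack $\psi_{rd}$ using (\ref{eq:prac1}) as the overlap sum term plus $-\frac{1}{n}\varphi(D^{(bin)}(-1))-\frac{1}{n}\varphi(D^{(sph)}(-1))$. The binary contribution is immediate: because $D^{(bin)}(s)=\sum_i |\sum_{k}c_k\h_i^{(k)}|$ is independent of the sign $s$ (it is decoupled and absolute-valued as in (\ref{eq:prac4})-(\ref{eq:prac5})), the identity $\varphi(D^{(bin)}(-1),\c)=n\varphi(D_1^{(bin)}(c_k),\c)$ is inherited from (\ref{eq:prac6}) with no change.

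The substantive step is the spherical part, where the sign does matter. Here $D^{(sph)}(-1)=-\sqrt{n}\|\sum_k b_k\u^{(2,k)}\|_2$, and the square-root identity used in (\ref{eq:prac8}) must be applied in its dual form $-\|v\|_2=\max_{\gamma>0}\!\left(-\|v\|_2^2/(4\gamma)-\gamma\right)$. After the same rescaling $\gamma=\gamma_{sq}\sqrt{n}$ this yields $D^{(sph)}(-1)=\max_{\gamma_{sq}>0}\!\left(-\sum_{i=1}^m D_i^{(sph)}(b_k)-\gamma_{sq}n\right)$. Since the nested exponentials defining $\varphi$ are monotone in their argument, the outer maximum passes through, producing $\varphi(D^{(sph)}(-1),\c)=\max_{\gamma_{sq}}\!\left[-\gamma_{sq}n+m\,\varphi(-D_1^{(sph)}(b_k),\c)\right]$, and hence $-\tfrac{1}{n}\varphi(D^{(sph)}(-1),\c)=\min_{\gamma_{sq}}\!\left[\gamma_{sq}-\alpha\varphi(-D_1^{(sph)}(b_k),\c)\right]$. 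This is precisely the sign pattern $+\gamma_{sq}-\alpha\varphi(-D_1^{(sph)})$ appearing in (\ref{eq:negprac13}), replacing the positive-case pattern $+\gamma_{sq}+\alpha\varphi(+D_1^{(sph)})$.

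Finally, substituting the stationary $(\hat{\p},\hat{\q},\hat{\c},\hat{\gamma}_{sq})$ determined by (\ref{eq:negthmprac1eq1}) into $\bar{\psi}_{rd}(\p,\q,\c,\gamma_{sq},1,1,-1)$ gives (\ref{eq:negthmprac1eq2}), completing the proof. The main obstacle I anticipate is purely bookkeeping: justifying that the optimization in $\gamma_{sq}$ can be exchanged with the nested $\mE$-and-$\log$ operators inside $\varphi$ (which rests on the monotonicity noted above) and that this optimization commutes with the $n\to\infty$ limit; both are standard in the sfl RDT framework of \cite{Stojnicflrdt23,Stojnicsflgscompyx23} and also underlie the positive-case proof, so no essentially new probabilistic input is needed beyond what Theorem \ref{thm:thmsflrdt1} already provides for $s=-1$.
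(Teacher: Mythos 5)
Your proposal is correct and follows essentially the same route as the paper: the paper's proof of Theorem \ref{thme:negthmprac1} simply points back to the "previous discussion" (\ref{eq:negprac11})--(\ref{eq:negprac18}) together with Corollary \ref{cor:cor1} at $s=-1$ and the decoupling identities (\ref{eq:prac1})--(\ref{eq:prac10}), which is exactly the chain you reconstruct (sign-independence of the binary term via the absolute value, and the square-root trick applied to the negated norm producing the $+\gamma_{sq}-\alpha\varphi(-D_1^{(sph)})$ pattern). Your added remarks on interchanging the $\gamma_{sq}$ optimization with the nested expectations are a slightly more explicit account of a step the paper leaves implicit, but they do not constitute a different approach.
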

\begin{proof}
Analogously to Theorem \ref{thme:thmprac1}, follows from the previous discussion, Theorem \ref{thm:thmsflrdt1}, Corollary \ref{cor:cor1}, and the sfl RDT machinery presented in \cite{Stojnicnflgscompyx23,Stojnicsflgscompyx23,Stojnicflrdt23}.
\end{proof}

\subsubsection{Numerical evaluations}
\label{nuemrical}

The same way results of Theorem \ref{thme:thmprac1} were utilized in the previous section for the numerical evaluations regarding the positive model, we, here, utilize results of Theorem \ref{thme:negthmprac1} to conduct the corresponding  numerical evaluations for the negative model. To ease the exposition, we parallel the positive model evaluations as much as possible. Several analytical results that were obtained earlier for the positive model have their negative model  counterparts and we state them below as well. As earlier, we start with $r=1$ and proceed inductively while, on occasion, specializing the considerations to the square case ($\alpha=1$) to enable the numerical values concreteness.

\underline{1) \textbf{\emph{$r=1$ -- first level of lifting:}}} For $r=1$ one has that $\hat{\p}_1\rightarrow 1$ and $\hat{\q}_1\rightarrow 1$ which together with $\hat{\p}_{r+1}=\hat{\p}_{2}=\hat{\q}_{r+1}=\hat{\q}_{2}=0$, and $\hat{\c}_{2}\rightarrow 0$ gives
\begin{align}\label{eq:negprac19}
    \bar{\psi}_{rd}(\hat{\p},\hat{\q},\hat{\c},\gamma_{sq},1,1,-1)   & =   \frac{1}{2}
\c_2
  - \frac{1}{\c_2}\log\lp \mE_{{\mathcal U}_2} e^{\c_2|\sqrt{1-0}\h_1^{(2)} |}\rp +\gamma_{sq}
- \alpha\frac{1}{\c_2}\log\lp \mE_{{\mathcal U}_2} e^{-\c_2\frac{(\sqrt{1-0}\u_1^{(2,2)})^2}{4\gamma_{sq}}}\rp \nonumber \\
& \rightarrow
  - \frac{1}{\c_2}\log\lp 1+ \mE_{{\mathcal U}_2} \c_2|\sqrt{1-0}\h_1^{(2)} |\rp +\gamma_{sq}
- \alpha\frac{1}{\c_2}\log\lp 1- \mE_{{\mathcal U}_2} \c_2\frac{(\sqrt{1-0}\u_1^{(2,2)})^2}{4\gamma_{sq}}\rp \nonumber \\
& \rightarrow
  - \frac{1}{\c_2}\log\lp 1+ \c_2\sqrt{\frac{2}{\pi}}\rp +\gamma_{sq}
- \alpha\frac{1}{\c_2}\log\lp 1- \frac{\c_2}{4\gamma_{sq}}\rp \nonumber \\
& \rightarrow
 - \sqrt{\frac{2}{\pi}}+\gamma_{sq}
+  \frac{\alpha}{4\gamma_{sq}}.
  \end{align}
One then easily finds $\hat{\gamma}_{sq}=\frac{\sqrt{\alpha}}{2}$ and
\begin{align}\label{eq:negprac20}
 - f^{-,1}_{sq}(\infty)=\bar{\psi}_{rd}(\hat{\p},\hat{\q},\hat{\c},\hat{\gamma}_{sq},1,1,-1)   & =
  - \sqrt{\frac{2}{\pi}}+\sqrt{\alpha},
  \end{align}
which for $\alpha=1$ becomes
\begin{align}\label{eq:negprac21}
 - f^{-,1}_{sq}(\infty) =
  - \sqrt{\frac{2}{\pi}}+1\rightarrow \bl{\mathbf{0.2021}}.
  \end{align}

\underline{2) \textbf{\emph{$r=2$ -- second level of lifting:}}} Paralleling again the presentation of the positive model, we split the second level of lifting into two subparts: (i) \emph{partial} second level of lifting; and (ii) \emph{full} second level of lifting.

For the \emph{partial} part, we have, when $r=2$, that $\hat{\p}_1\rightarrow 1$ and $\hat{\q}_1\rightarrow 1$, $\hat{\p}_{2}=\hat{\q}_{2}=0$, and $\hat{\p}_{r+1}=\hat{\p}_{3}=\hat{\q}_{r+1}=\hat{\q}_{3}=0$ but in general  $\hat{\c}_{2}\neq 0$. As above, one again has
\begin{align}\label{eq:negprac22}
    \bar{\psi}_{rd}(\hat{\p},\hat{\q},\c,\gamma_{sq},1,1,-1)   & =   \frac{1}{2}
\c_2
  - \frac{1}{\c_2}\log\lp \mE_{{\mathcal U}_2} e^{\c_2|\sqrt{1-0}\h_1^{(2)} |}\rp + \gamma_{sq}
- \alpha\frac{1}{\c_2}\log\lp \mE_{{\mathcal U}_2} e^{-\c_2\frac{(\sqrt{1-0}\u_1^{(2,2)})^2}{4\gamma_{sq}}}\rp.
   \end{align}
After taking the derivatives with respect to $\c_2$ and $\gamma_{sq}$, one, for $\alpha=1$, finds $\hat{\gamma}_{sq}=0.1654$ and $\hat{\c}_2=2.6916$, which gives
\begin{align}\label{eq:negprac23}
\hspace{-2in}(\mbox{\emph{partial} second level:}) \qquad \qquad   -f^{-,2}_{sq}(\infty) \rightarrow \bl{\mathbf{0.3202}}.
  \end{align}

For the \emph{full} part, the setup is basically the same as above with the sole exception that now, in general, (in addition to $\hat{\c}_{2}\neq 0$) one also has $\p_2\neq0$ and $\q_2\neq0$. Writing as above, we then find
\begin{align}\label{eq:negprac24}
    \bar{\psi}_{rd}(\p,\q,\c,\gamma_{sq},1,1,-1)   & =   \frac{1}{2}
(1-\p_2\q_2)\c_2
  - \frac{1}{\c_2}\mE_{{\mathcal U}_3}\log\lp \mE_{{\mathcal U}_2} e^{\c_2|\sqrt{1-\q_2}\h_1^{(2)} +\sqrt{\q_2}\h_1^{(3)} |}\rp \nonumber \\
& \quad    + \gamma_{sq}
 -\alpha\frac{1}{\c_2}\mE_{{\mathcal U}_3} \log\lp \mE_{{\mathcal U}_2} e^{-\c_2\frac{(\sqrt{1-\p_2}\u_i^{(2,2)}+\sqrt{\p_2}\u_i^{(2,3)})^2}{4\gamma_{sq}}}\rp \nonumber \\
& =   \frac{1}{2}
(1-\p_2\q_2)\c_2
   -\frac{1}{\c_2}\mE_{{\mathcal U}_3}\log\lp \mE_{{\mathcal U}_2} e^{\c_2|\sqrt{1-\q_2}\h_1^{(2)} +\sqrt{\q_2}\h_1^{(3)} |}\rp \nonumber \\
   & \quad    -(- \gamma_{sq})
- \alpha \lp -\frac{1}{2\c_2} \log \lp \frac{-2\gamma_{sq}-\c_2(1-\p_2)}{-2\gamma_{sq}} \rp  +  \frac{\p_2}{2(-2\gamma_{sq}-\c_2(1-\p_2))}   \rp \nonumber \\
& = -\bar{\psi}_{rd}(\p,\q,\c,-\gamma_{sq},1,1,1).
   \end{align}
After taking the derivatives one, for $\alpha=1$, finds $\hat{\gamma}_{sq}=0.1654$, $\p_2=0$, $\q_2=0$, and $\hat{\c}_2=2.6916$, which gives
\begin{align}\label{eq:negprac25}
\hspace{-2in}(\mbox{\emph{full} second level:}) \qquad \qquad  -f^{-,2}_{sq}(\infty) \rightarrow \bl{\mathbf{0.3202}}.
  \end{align}
In other words, we observe a remarkable \emph{indistinguishability between partial and full lifting} phenomenon.

\vspace{0.1in}

\underline{3) \textbf{\emph{$r=3$ -- third level of lifting:}}} As for the positive model, we  immediately  consider \emph{full} third level of lifting and, for $r=3$, have that $\hat{\p}_1\rightarrow 1$ and $\hat{\q}_1\rightarrow 1$  as well as  $\hat{\p}_{r+1}=\hat{\p}_{4}=\hat{\q}_{r+1}=\hat{\q}_{4}=0$. As above, one again has
 \begin{align}\label{eq:negprac26}
    \bar{\psi}_{rd}(\p,\q,\c,\gamma_{sq},1,1,-1)   & =   \frac{1}{2}
(1-\p_2\q_2)\c_2 + \frac{1}{2}
(\p_2\q_2-\p_3\q_3)\c_3 \nonumber \\
&\quad  - \frac{1}{\c_3}\mE_{{\mathcal U}_4}\log\lp \mE_{{\mathcal U}_3} \lp  \mE_{{\mathcal U}_2} e^{\c_2|\sqrt{1-\q_2}\h_1^{(2)} +\sqrt{\q_2-\q_3}\h_1^{(3)}+\sqrt{\q_3}\h_1^{(4)}|} \rp^{\frac{\c_3}{\c_2}} \rp \nonumber \\
& \quad    + \gamma_{sq}
 -\alpha\frac{1}{\c_3}\mE_{{\mathcal U}_4} \log\lp \mE_{{\mathcal U}_4} \lp \mE_{{\mathcal U}_2} e^{-\c_2\frac{(\sqrt{1-\p_2}\u_i^{(2,2)}+\sqrt{\p_2-\p_3}\u_i^{(2,3)}+\sqrt{\p_3}\u_i^{(2,4)})^2}{4\gamma_{sq}}}\rp^{\frac{\c_3}{\c_2}} \rp \nonumber \\
   & =   \frac{1}{2}
(1-\p_2\q_2)\c_2 + \frac{1}{2}
(\p_2\q_2-\p_3\q_3)\c_3 \nonumber \\
&\quad  - \frac{1}{\c_3}\mE_{{\mathcal U}_4}\log\lp \mE_{{\mathcal U}_3} \lp  \mE_{{\mathcal U}_2} e^{\c_2|\sqrt{1-\q_2}\h_1^{(2)} +\sqrt{\q_2-\q_3}\h_1^{(3)}+\sqrt{\q_3}\h_1^{(4)}|} \rp^{\frac{\c_3}{\c_2}} \rp \nonumber \\
& \quad    -(- \gamma_{sq})
- \alpha
\Bigg(\Bigg. -\frac{1}{2\c_2} \log \lp \frac{-2\gamma_{sq}-\c_2(1-\p_2)}{-2\gamma_{sq}} \rp \nonumber \\
& \quad -\frac{1}{2\c_3} \log \lp \frac{-2\gamma_{sq}-\c_2(1-\p_2)-\c_3(\p_2-\p_3)}{-2\gamma_{sq}-\c_2(1-\p_2)} \rp  \nonumber \\
& \quad +  \frac{\p_3}{2(-2\gamma_{sq}-\c_2(1-\p_2)-\c_3(\p_2-\p_3))}   \Bigg.\Bigg)\nonumber \\
& = -\bar{\psi}_{rd}(\p,\q,\c,-\gamma_{sq},1,1,1).
   \end{align}
Taking the derivatives we obtain (for $\alpha=1$) $\hat{\gamma}_{sq}=0.1748$, $\hat{\p}_3=\hat{\q}_3=0$, $\hat{\p}_2=0.5722$, $\hat{q1}_2=0.0599$, $\hat{\c}_2=10.54$, and $\hat{\c}_1=2.2264$ and
\begin{align}\label{eq:negprac27}
\hspace{-2in}(\mbox{\emph{partial/full} third level:}) \qquad \qquad -f^{-,3}_{sq}(\infty) \rightarrow \bl{\mathbf{0.3272}}.
  \end{align}
We again observe the remarkable \emph{indistinguishability between partial and full lifting} phenomenon.

\underline{4) \textbf{General \emph{$r$-th level of lifting:}}}  For the completeness, we state the form of $\bar{\psi}_{rd}(\p,\q,\c,\gamma_{sq},1,1,1) $ obtained for general $r$. Following the above procedures, we basically have
 \begin{align}\label{eq:negprac29}
   \bar{\psi}_{rd}(\p,\q,\c,\gamma_{sq},1,1,-1)   & =   \frac{1}{2}
\sum_{k=2}^{r+1}(\p_{k-1}\q_{k-1}-\p_k\q_k)\c_k   \nonumber \\
&\quad  - \frac{1}{\c_r}\mE_{{\mathcal U}_{r+1}}\log\lp \mE_{{\mathcal U}_r} \lp \dots \lp \mE_{{\mathcal U}_3} \lp  \mE_{{\mathcal U}_2} e^{\c_2|\sum_{k=2}^{r+1}\sqrt{\q_{k-1}-\q_k}\h_1^{(k)} |} \rp^{\frac{\c_3}{\c_2}} \rp^{\frac{\c_4}{\c_{3}}} \dots \rp^{\frac{\c_r}{\c_{r-1}}} \rp
\nonumber \\
 & \quad    - (- \gamma_{sq})
- \alpha
\Bigg(\Bigg. -\sum_{k=2}^{r}\frac{1}{2\c_k} \log \lp \frac{\Theta_{k}}{\Theta_{k-1}}\rp +\frac{\p_r}{2\Theta_{r}}  \Bigg.\Bigg)\nonumber \\
& = -\bar{\psi}_{rd}(\p,\q,\c,-\gamma_{sq},1,1,1),
    \end{align}
where
\begin{eqnarray}\label{eq:negprac30}
 \Theta_{1} & = & -2\gamma_{sq} \nonumber \\
 \Theta_{k} & = & \Theta_{k-1}-\c_k(\p_{k-1}-\p_k), k\in\{2,3,\dots,r\}.
 \end{eqnarray}
The above characterization of $\bar{\psi}(\cdot)$ gives explicit dependence on all key parameters $\p$, $\q$, $\m$, and $\gamma_{sq}$ and consequently  enables direct computations of all needed derivatives for any $r\in\mN$. In addition to the above presented numerical evaluations for the first three levels, we also conducted the numerical evaluations for the fourth (partial) level $r=4$  and obtained
\begin{align}\label{eq:negprac31}
\hspace{-2in}(\mbox{\emph{partial} fourth level:}) \qquad \qquad -f^{-,4}_{sq}(\infty) \rightarrow \bl{\mathbf{0.3279}}.
  \end{align}
All the relevant quantities for the fourth (partial)  level of lifting, together with the first, second, and third  level corresponding ones are systematically shown in Table \ref{tab:NEG2rsbunifiedsqrtpos}. Observing the results from  Table \ref{tab:NEG2rsbunifiedsqrtpos} more closely, we once again note the rapid convergence of the entire lifting mechanism with the fourth decimal convergence of the ground state energy already achieved on the fourth level of lifting.

\begin{table}[h]
\caption{$r$-sfl RDT parameters; \underline{negative} \emph{square-root} Hopfield model; $\alpha=1$; $\hat{\p}_1,\hat{\q}_1,\hat{\c}_1\rightarrow 1$; $n,\beta\rightarrow\infty$}\vspace{.1in}
\centering
\def\arraystretch{1.1}
\begin{tabular}{||l||c||c|c||c|c||c|c|c||c||}\hline\hline
 \hspace{-0in}$r$-sfl RDT                                             & $\hat{\gamma}_{sq}$    & $\hat{\p}_3$     & $\hat{\p}_2$        & $\hat{\q}_3$ & $\hat{\q}_2$ &  $\hat{\c}_4$  &  $\hat{\c}_3$   & $\hat{\c}_2$   & $-f_{sq}^{-,r}$  \\ \hline\hline
$\mathbf{1}$ (partial/full)                                      & $0.5$ & $0$ & $0$  & $0$ & $0$ &  $\rightarrow 0$ &  $\rightarrow 0$
 &  $\rightarrow 0$  & \bl{$\mathbf{0.2021}$} \\ \hline\hline
 $\mathbf{2}$ (partial/full)                                      & $0.1654$ & $0$ & $0$  & $0$ & $0$ &  $\rightarrow 0$ &  $\rightarrow 0$
 &  $2.6916$  & \bl{$\mathbf{0.3202}$} \\ \hline\hline
 $\mathbf{3}$ (partial/full)                                      & $0.1748$ & $0$ & $0.5722$  & $0$ & $0.0599$  & $\rightarrow 0$
 & $10.54$ & $2.2264$  & \bl{$\mathbf{0.3272}$} \\ \hline\hline
 $\mathbf{4}$ (partial)                                      & $0.1766$ & $.3639$ & $0.6836$  & $0.0107$ & $0.1282$ &  $27.98$
 & $5.07$ &  $2.1306$  & \bl{$\mathbf{0.3279}$} \\ \hline\hline
\end{tabular}
\label{tab:NEG2rsbunifiedsqrtpos}
\end{table}

We also add that all considerations regarding more efficient numerical evaluations between (\ref{eq:thm3eq2})-(\ref{eq:thm3eq8a1}) are trivial to repeat. The only thing that needs to be adjusted is $s$. Instead of $s=1$ now $s=-1$. The final results are exactly as those stated in (\ref{eq:thm3eq7})-(\ref{eq:thm3eq8a1}). The only tiny difference is that now $\hat{\gamma}_{sq}$ from Table \ref{tab:NEG2rsbunifiedsqrtpos} should be taken with the negative sign in these equations.

The above mentioned \emph{indistinguishability between partial and full lifting} phenomenon clearly visible in Table \ref{tab:NEG2rsbunifiedsqrtpos} disappears as $\alpha$ increases. For the completeness, we provide in Table \ref{tab:NEG2rsbunifiedsqrtpos2} a scenario ($\alpha=3.5$) where one observes the behavior that fully resembles the one seen in the positive model with a clear distinction between partial and full lifting on both second and third level. The convergence on the fourth decimal is achieved now on the third level which also parallels the behavior of the positive models.

\begin{table}[h]
\caption{$r$-sfl RDT parameters; \underline{negative} \emph{square-root} Hopfield model; $\alpha=3.5$; $\hat{\c}_1\rightarrow 1$; $n,\beta\rightarrow\infty$}\vspace{.1in}
\centering
\def\arraystretch{1.2}
\begin{tabular}{||l||c||c|c|c||c|c|c||c|c||c||}\hline\hline
 \hspace{-0in}$r$-sfl RDT                                             & $\hat{\gamma}_{sq}$    & $\hat{\p}_3$ & $\hat{\p}_2$ & $\hat{\p}_1`$        & $\hat{\q}_3$ & $\hat{\q}_2$  & $\hat{\q}_1$ &  $\hat{\c}_3$   & $\hat{\c}_2$    & $f_{sq}^{-,r}$  \\ \hline\hline
$\mathbf{1}$ (full)                                      & $0.9354$ & $0$ & $0$  & $\rightarrow 1$  & $0$ & $0$ & $\rightarrow 1$ &  $\rightarrow 0$
 &  $\rightarrow 0$  & \bl{$\mathbf{1.0729}$} \\ \hline\hline
 $\mathbf{2}$ (partial)                                      & $0.6366$ & $0$ & $0$ & $\rightarrow 1$ & $0$ & $0$ & $\rightarrow 1$ &  $\rightarrow 0$
 &  $1.4759$   & \bl{$\mathbf{1.1288}$} \\ \hline
  $\mathbf{2}$ (full)                                      & $0.6234$ & $0$ & $0.3416$ & $\rightarrow 1$ & $0$ & $0.2154$ & $\rightarrow 1$ &  $\rightarrow 0$
 &  $1.6843$   & \bl{$\mathbf{1.1301}$}  \\ \hline\hline
 $\mathbf{3}$ (partial)                                      & $0.6030$ & $0$ & $0.7921$  & $\rightarrow 1$ & $0$ & $0.6399$ & $\rightarrow 1$ &  $1.15$
 &  $2.26$   & \bl{$\mathbf{1.1309}$}  \\ \hline
  $\mathbf{3}$ (full)                                      & $0.5940$ & $0.2464$ & $0.8935$  & $\rightarrow 1$ & $0.1508$ & $0.7887$ & $\rightarrow 1$ &  $1.40$
 &  $2.79$   & \bl{$\mathbf{1.1312}$}  \\ \hline\hline
\end{tabular}
\label{tab:NEG2rsbunifiedsqrtpos2}
\end{table}

\subsubsection{Modulo-$\m$ sfl RDT}
\label{sec:posmodm}

As in the case of the positive model, one can rely on the modulo-$m$ sfl RDT frame of \cite{Stojnicsflgscompyx23} for the negative model as well. Instead of Theorem \ref{thme:negthmprac1}, we then have the following theorem.

\begin{theorem}
  \label{thme:negthmprac2}
Assume the setup of Theorem \ref{thme:negthmprac1} and instead of the complete, assume the modulo-$\m$ sfl RDT setup of \cite{Stojnicsflgscompyx23}. Let
 \begin{align}\label{eq:negthmprac2eq1}
   \bar{\psi}_{rd}(\p,\q,\c,\gamma_{sq},1,1,-1)   & =   \frac{1}{2}
\sum_{k=2}^{r+1}(\p_{k-1}\q_{k-1}-\p_k\q_k)\c_k   \nonumber \\
&\quad  - \frac{1}{\c_r}\mE_{{\mathcal U}_{r+1}}\log\lp \mE_{{\mathcal U}_r} \lp \dots \lp \mE_{{\mathcal U}_3} \lp  \mE_{{\mathcal U}_2} e^{\c_2|\sum_{k=2}^{r+1}\sqrt{\q_{k-1}-\q_k}\h_1^{(k)} |} \rp^{\frac{\c_3}{\c_2}} \rp^{\frac{\c_4}{\c_{3}}} \dots \rp^{\frac{\c_r}{\c_{r-1}}} \rp
\nonumber \\
 & \quad    -(- \gamma_{sq})
- \alpha
\Bigg(\Bigg. -\sum_{k=2}^{r}\frac{1}{2\c_k} \log \lp \frac{\Theta_{k}}{\Theta_{k-1}}\rp +\frac{\p_r}{2\Theta_{r}}  \Bigg.\Bigg) \nonumber \\
& =    -\bar{\psi}_{rd}(\p,\q,\c,-\gamma_{sq},1,1,1)\nonumber \\
  \Theta_{1} & =  -2\gamma_{sq} \nonumber \\
 \Theta_{k} & =  \Theta_{k-1}-\c_k(\p_{k-1}-\p_k), k\in\{2,3,\dots,r\}.
 \end{align}
Let $\hat{\gamma}_{sq}$ the ``non-fixed" parts of $\hat{\p}$ and $\hat{\q}$ be the solutions of the following systems of equations
\begin{align}\label{eq:negthmprac2eq2}
   \frac{d \bar{\psi}_{rd}(\p,\q,\c,\gamma_{sq},1,1,-1)}{d\p} & = - \frac{d \bar{\psi}_{rd}(\p,\q,\c,-\gamma_{sq},1,1,1)}{d\p}
   = \frac{d \bar{\psi}_{rd}(\p,\q,\c,\gamma_{sq},1,1,1)}{d\p} = 0 \nonumber \\
   \frac{d \bar{\psi}_{rd}(\p,\q,\c,\gamma_{sq},1,1,-1)}{d\q} & =  -\frac{d \bar{\psi}_{rd}(\p,\q,\c,-\gamma_{sq},1,1,1)}{d\q}
   = \frac{d \bar{\psi}_{rd}(\p,\q,\c,\gamma_{sq},1,1,1)}{d\q}= 0 \nonumber \\
    \frac{d \bar{\psi}_{rd}(\p,\q,\c,\gamma_{sq},1,1,-1)}{d\gamma_{sq}} & =   - \frac{d \bar{\psi}_{rd}(\p,\q,\c,-\gamma_{sq},1,1,1)}{d\gamma_{sq}}
    = \frac{d \bar{\psi}_{rd}(\p,\q,\c,\gamma_{sq},1,1,1)}{d\gamma_{sq}}=  0.
 \end{align}
Then
 \begin{eqnarray}
-f_{sq,\m}^{-}(\infty)
& \leq  & \max_{\c} \lp  \bar{\psi}_{rd}(\hat{\p},\hat{\q},\c,\hat{\gamma}_{sq},1,1,-1)\rp
= \min_{\c} \lp - \bar{\psi}_{rd}(\hat{\p},\hat{\q},\c,-\hat{\gamma}_{sq},1,1,1)\rp.
  \label{eq:negthmprac2eq3}
\end{eqnarray}
\end{theorem}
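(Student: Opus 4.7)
The plan is to mirror the proof of Theorem \ref{thme:thmprac2} (the positive-model modulo-$\m$ statement), adjusting only what is forced by the sign flip $s=1\mapsto s=-1$ and then invoking the symmetry identity for $\bar\psi_{rd}$ that was already exhibited in (\ref{eq:negprac24}), (\ref{eq:negprac26}), and (\ref{eq:negprac29}). I would first apply Corollary \ref{cor:cor1} at $s=-1$ to the random primal $\psi_{rp}=-\max_{\x\in\cX}(-1)\max_{\y\in\cY}\y^TG\x$, which under the \emph{complete} sfl RDT frame yields the equality $\lim_{n\to\infty}\mE_G\psi_{rp}/\sqrt{n}=\lim_{n\to\infty}\psi_{rd}(\hat\p,\hat\q,\hat\c,1,1,-1)$. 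Replacing the complete frame by the modulo-$\m$ frame of \cite{Stojnicsflgscompyx23} weakens the strong sfl random duality to a one-sided (weak) inequality in which stationarity over $\c$ is no longer required to hold; this is exactly what produces the $\max_\c$ (respectively $\min_\c$) on the right-hand side of (\ref{eq:negthmprac2eq3}).

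Next, I would reduce $\psi_{rd}(\p,\q,\c,1,1,-1)$ to the explicit form $\bar\psi_{rd}(\p,\q,\c,\gamma_{sq},1,1,-1)$ displayed in (\ref{eq:negthmprac2eq1}) by repeating the chain of manipulations (\ref{eq:prac1})--(\ref{eq:prac10}): decouple over the Bernoulli component $\x$, apply the square-root trick to $D^{(sph)}(-1)=-\sqrt{n}\|\sum_k b_k\u^{(2,k)}\|_2$, and carry out the inner Gaussian spherical integrals in closed form. The only point that needs care is that for $s=-1$ the $\gamma_{sq}$-optimization produces a concave, rather than convex, quadratic inside the $\log$, which is precisely what forces the substitution $\gamma_{sq}\mapsto -\gamma_{sq}$ in the spherical contribution and thereby yields the identity
\begin{equation*}
\bar\psi_{rd}(\p,\q,\c,\gamma_{sq},1,1,-1)=-\bar\psi_{rd}(\p,\q,\c,-\gamma_{sq},1,1,1),
\end{equation*}
already recorded above. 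This identity immediately converts the stationarity system (\ref{eq:negthmprac2eq2}) for $(\p,\q,\gamma_{sq})$ into the corresponding positive-model system and also converts $\max_\c\bar\psi_{rd}(\cdot,-1)$ to $\min_\c(-\bar\psi_{rd}(\cdot,1))$ evaluated at $-\hat\gamma_{sq}$, which is the second equality in (\ref{eq:negthmprac2eq3}).

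Finally, I would verify the direction of the inequality in (\ref{eq:negthmprac2eq3}). Since in Theorem \ref{thme:thmprac2} the modulo-$\m$ frame gave $f_{sq,\m}^{+}(\infty)\geq\min_\c(-\bar\psi_{rd}(\hat\p,\hat\q,\c,\hat\gamma_{sq},1,1,1))$, pushing this inequality through $\psi_{rp}^{-}=-\psi_{rp}^{+}|_{\text{suitably}}$ and through the sign identity above flips the direction and produces $-f_{sq,\m}^{-}(\infty)\leq\max_\c\bar\psi_{rd}(\hat\p,\hat\q,\c,\hat\gamma_{sq},1,1,-1)$, matching the claim. The main obstacle will be signs and the direction of the max/min bookkeeping: one must carefully track that (i) the weak sfl duality supplies $\geq$ in the positive case but $\leq$ in $-f^{-}_{sq,\m}(\infty)$ after the overall sign flip, and (ii) the $\gamma_{sq}\mapsto -\gamma_{sq}$ substitution must be applied consistently in $\Theta_k$ so that the explicit spherical closed form remains valid (i.e., the denominator $\Theta_k$ stays negative, which is automatic once $\hat\gamma_{sq}>0$ is enforced). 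Once these signs are pinned down, the remainder is a routine reuse of the sfl RDT machinery of \cite{Stojnicnflgscompyx23,Stojnicsflgscompyx23,Stojnicflrdt23}.
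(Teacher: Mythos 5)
Your proposal is correct and follows essentially the same route as the paper: the paper's own proof is a one-line appeal to "the previous discussion, Theorem \ref{thm:thmsflrdt1}, Corollary \ref{cor:cor1}, and the sfl RDT machinery," and that previous discussion is precisely the chain you reconstruct — Corollary \ref{cor:cor1} at $s=-1$, the reduction (\ref{eq:prac1})--(\ref{eq:prac10}) with the square-root trick giving the closed spherical form with $\Theta_1=-2\gamma_{sq}$, the sign identity $\bar\psi_{rd}(\p,\q,\c,\gamma_{sq},1,1,-1)=-\bar\psi_{rd}(\p,\q,\c,-\gamma_{sq},1,1,1)$ from (\ref{eq:negprac24})--(\ref{eq:negprac29}), and the modulo-$\m$ relaxation of stationarity over $\c$ into a one-sided bound. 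The only cosmetic caveat is that the inequality direction is best justified directly from the weak (modulo-$\m$) duality at $s=-1$ rather than by formally negating the positive-model statement, but you also state that correct justification, so nothing is missing.
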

\begin{proof}
Follows from the previous discussion, Theorem \ref{thm:thmsflrdt1}, Corollary \ref{cor:cor1}, and the sfl RDT machinery presented in \cite{Stojnicnflgscompyx23,Stojnicsflgscompyx23,Stojnicflrdt23}.
\end{proof}

We again conducted the numerical evaluations (albeit on a smaller range of $\c$ parameters) using the modulo-$\m$ results of the above theorem and obtained $-f_{sq}^{-,r}(\infty)=-f_{sq,\m}^{-,r}(\infty)$.

\section{Conclusion}
\label{sec:conc}

The famous Hopfield models and their behavior in statistical mediums are the main subject of study in this paper. While structurally somewhat similar to the celebrated Sherrington-Kirkpatrick (SK) models, these models are a much harder mathematical challenge. Early statistical physics considerations \cite{CriAmiGut86,SteKuh94,AmiGutSom87}
  traced a possible path for mathematically rigorous considerations. However, despite the existence of such a, potentially helpful, tool, the progress on the rigourous front   has been pretty much stagnant over the last several decades. Namely, the mathematically rigorous treatments, (see, e.g.,  \cite{PasShchTir94,ShchTir93,BarGenGueTan10,Tal98,Zhao11}), typically relate to the so-called high-temperature ($T$) regime (low inverse temperature $\beta=\frac{1}{T}$), where the so-called replica symmetric behavior is in place. On the other hand, of a prevalent interest in many mathematical areas (including optimization, algorithms, machine learning and neural networks) are precisely their counterparts, i.e., the regimes where the symmetric behavior is not in place. Moreover, the hardest of them all, the so-called ground state, zero temperature, regime is typically the most sought after one.

We start by recognizing that, due to measure concentrations, one can, instead of the standard Hopfield models and associated ground state energies, study their so-called \emph{square-root} variants. We then distinguish two important classes of the models: (i) the \emph{positive}; and (ii) the \emph{negative} ones. Utilizing the random duality theory (RDT) principles, \cite{StojnicHopBnds10} established the upper bounds for the positive and the lower bounds for the negative models which both turned out to precisely match the corresponding replica symmetry based predictions. On the other hand, \cite{StojnicMoreSophHopBnds10} then went a bit further, used a \emph{lifted} RDT variant, and lowered the \cite{StojnicHopBnds10}'s bounds on the positive and lifted the \cite{StojnicHopBnds10}'s bounds on the negative models, thereby showing that the replica symmetry produces a strict (but \emph{not} tight)  bounds and ensuring that the symmetry \emph{must} be broken.

We, here, continue the line of study of  \cite{StojnicHopBnds10,StojnicMoreSophHopBnds10}, where the Hopfield models are connected to the behaviour of the bilinearly indexed (bli) random processes. However, we, now, rely on a strong recent progress made in studying bli processes in \cite{Stojnicnflgscompyx23,Stojnicsflgscompyx23} and on the main foundational principles of the \emph{stationarized fully lifted} (sfl) RDT established in \cite{Stojnicflrdt23} based on that progress. After recognizing that the statistical variants of the Hopfield models are indeed directly related to the bli processes, we show that the sfl RDT principles can be used to characterize their statistical behavior. As the obtained analytical characterizations rely on a nested concept of lifting, they are fully operational only if one can successfully conduct underlying numerical evaluations. After conducting such evaluations for the ground state energies of both positive and negative models, we observe an incredible level of convergence of the employed sfl RDT mechanism. Namely, for the so-called square case ($m=n$), the fourth decimal precision is achieved already on the third (second non-trivial) level (3-sfl RDT) for the positive case and on the fourth (third non-trivial) level (4-sfl RDT) for the corresponding negative one. In particular, we obtain the scaled ground state free energy $\approx 1.7788$ for the positive and $\approx 0.3279$  for the negative model. We also observe a remarkable structural difference in the lifting form of both models. It turns out that the positive model exhibits the so-called \emph{full} lifting for any of the evaluated levels. On the other hand, the negative model exhibits exclusively the \emph{partial} lifting. In other words, for the negative models, we uncover the existence of the ``\emph{indistinguishability between the partial and full lifting}'' phenomenon. Even more remarkably, this phenomenon disappears as $\alpha$ increases. While we uncover both the existence and the disappearance of this phenomenon as completely mathematical objects, it will be more than interesting to see if a statistical physics explanation can be associated with it.

It is not that difficult to see that the introduced methodology easily allows for various extensions and generalizations. For example, all those discussed in \cite{Stojnicnflgscompyx23,Stojnicsflgscompyx23,Stojnicflrdt23},  including those related to the  asymmetric Little models
(see, e.g.,  \cite{BruParRit92,Little74,BarGenGue11bip,CabMarPaoPar88,AmiGutSom85,StojnicAsymmLittBnds11}), various \emph{random feasibility problems} (rfps) such as  \emph{spherical} perceptrons   (see, e.g., \cite{FPSUZ17,FraHwaUrb19,FraPar16,FraSclUrb19,FraSclUrb20,AlaSel20,StojnicGardGen13,StojnicGardSphErr13,StojnicGardSphNeg13,GarDer88,Gar88,Schlafli,Cover65,Winder,Winder61,Wendel,Cameron60,Joseph60,BalVen87,Ven86,SchTir02,SchTir03}), \emph{binary} perceptrons (see, e.g., \cite{StojnicGardGen13,GarDer88,Gar88,StojnicDiscPercp13,KraMez89,GutSte90,KimRoc98}), the \emph{sectional} compressed sensing $\ell_1$ phase transitions (see, e.g., \cite{StojnicCSetam09,StojnicLiftStrSec13}), and many others are immediately possible. Similarly to the problems that we considered here, such extensions require a bit of technical juggling that is problem specific and we discuss them, together with all underlying intricacies, in separate papers.

As already mentioned in \cite{Stojnicflrdt23}, the sfl RDT considerations, and therefore the ones presented here, do not require the standard Gaussianity  assumption of the random primals. Relying on the central limit theorem and, in particular, its Lindeberg variant \cite{Lindeberg22}, the obtained results can quickly be extended to a wide range of different statistics. Results of \cite{Chatterjee06} are a particularly elegant approach that enables such a quick extension.

\begin{singlespace}
\bibliographystyle{plain}
\bibliography{nflgscompyxRefs}
\end{singlespace}

\end{document}